\date{\today}
\theoremstyle{theorem}
    \newtheorem{theorem}{Theorem}
    \newtheorem{lemma}[theorem]{Lemma}
\theoremstyle{definition} 
    \newtheorem{result}[theorem]{Result}
    \newtheorem{remark}[theorem]{Remark}
    \newtheorem{example}[theorem]{Example}
    \newtheorem{exercise}[theorem]{Exercise}
\def\suchthat{\; : \;}
\def\l{\left}
\def\r{\right}
\def\<{\langle}
\def\>{\rangle}
\newcommand{\E}{\mbox{\bf E}}
\def\P{{\bf P}}
\newcommand\Tr{{\mbox{Tr}}}
\newcommand\mnote[1]{} 
\newcommand\be{\begin{equation*}}
\newcommand\ee{\end{equation*}}
\newcommand\ben{\begin{equation}}
\newcommand\een{\end{equation}}
\newcommand\bes{\begin{eqnarray*}}
\newcommand\ees{\end{eqnarray*}}
\newcommand\bex{\begin{exercise}}
\newcommand\eex{\end{exercise}}
\newcommand\beg{\begin{example}}
\newcommand\eeg{\end{example}}
\newcommand\benu{\begin{enumerate}}
\newcommand\eenu{\end{enumerate}}
\newcommand\beit{\begin{itemize}}
\newcommand\eeit{\end{itemize}}
\newcommand\berk{\begin{remark}}
\newcommand\eerk{\end{remark}}
\newcommand\bdefn{\begin{defintion}}
\newcommand\edefn{\end{definition}}
\newcommand\bthm{\begin{theorem}}
\newcommand\ethm{\end{theorem}}
\newcommand\bprf{\begin{proof}}
\newcommand\eprf{\end{proof}}
\newcommand\blem{\begin{lemma}}
\newcommand\elem{\end{lemma}}
\newcommand{\var}{\mbox{\rm Var}}
\newcommand{\Cov}{\mbox{\rm Cov}}
\newcommand{\sm}{{\raise0.3ex\hbox{$\scriptstyle \setminus$}}}
\def\l{\left}
\def\r{\right}
\def\CHI{\mathchoice%
{\raise2pt\hbox{$\chi$}}%
{\raise2pt\hbox{$\chi$}}%
{\raise1.3pt\hbox{$\scriptstyle\chi$}}%
{\raise0.8pt\hbox{$\scriptscriptstyle\chi$}}}
\def\smalloplus{\raise1pt\hbox{$\,\scriptstyle \oplus\;$}}
\title{Fluctuations of eigenvalues of  patterned random matrices}
\author{Kartick Adhikari}
\address{Department of Mathematics\\
        Indian Institute of Science\\
        Bangalore 560012, India}
\email{kartickmath [at] math.iisc.ernet.in}
\author{Koushik Saha}
\address{Department of Mathematics\\
        Indian Institute of Technology Bombay\\
         Powai, Mumbai  400076, India}
\email{ksaha [at] math.iitb.ac.in}
\date{\today}
\thanks{This work is partially supported by Inspire research grant of Koushik Saha and by Centre for Advanced Studies, IISc, Bangalore.}
\begin{document}
\maketitle
\begin{abstract}
In this article we  study  the fluctuation of linear statistics of eigenvalues of circulant, symmetric circulant, reverse circulant and Hankel matrices. We show that the linear spectral statistics of these matrices converge to the Gaussian distribution in total variation norm  when the matrices are constructed using i.i.d.  normal random variables. We also calculate the limiting variance of the linear spectral statistics for circulant, symmetric circulant and reverse circulant matrices.
\end{abstract}
\vskip.1in
\noindent{\bf Keywords :} Circulant matrix, reverse circulant matrix, Hankel matrix, linear statistics, central limit theorem,  spectral norm, total variation norm.
\section{Introduction and main results}
Let $A_n$ be an $n\times n$ matrix with real or complex entries.  A linear statistics of
eigenvalues  $\lambda_1,\lambda_2,\ldots, \lambda_n$  of $A_n$ is a
function of the form
$$
\frac{1}{n}\sum_{k=1}^{n}f(\lambda_k),
$$
where $f$ is  some fixed function. 
The fluctuations of eigenvalues was first considered by Arharov \cite{arharov} for sample covariance matrices.  In 1982, Jonsson \cite{jonsson} proved the Central limit theorem
(CLT) of linear eigenvalue statistics  for  Wishart matrix and he used the method of moments to establish the result. In 1975 Girko   considered the CLT for the traces of resolvent of the Wigner and the sample covariance matrices using the Stieltjes transform and the martingale techniques (for results and references, see \cite{girko}).
In last two decades the fluctuations of linear statistics of eigenvalues of different type of random matrices have been  studied extensively. For recent  fluctuation  results on Wigner matrices and sample covariance matrices, we refer to see  \cite{bai2004clt}, \cite{johansson1998}, \cite{lytova2009central}, \cite{shcherbina2011central}, \cite{soshnikov1998tracecentral}  and the references therein. For results on  band and sparse type  random matrices, see \cite{anderson2006clt},  \cite{jana2014}, \cite{li2013central},  \cite{shcherbina2015}. For  fluctuation of eigenvalues of Toeplitz and band Toeplitz matrices, see  \cite{chatterjee} and \cite{liu2012}.

In this article we  study  the fluctuation of linear statistics of eigenvalues of some pattered matrices, namely, circulant, symmetric circulant, reverse circulant and Hankel matrices. All these matrices are well studied in mathematics and statistics literature. Circulant matrices play a crucial role in the study of large dimensional Toeplitz matrices with non-random input. See, for example,  \cite{gray2006} and \cite{grenander:szego}. The eigenvalues of the circulant matrices also arise crucially in time series analysis (see \cite{fanyao}, \cite{pollock}). The block version of generalized circulant matrices also arise  in areas such as multi-level supersaturated design of experiment \cite{georgiou} and spectra of De Bruijn graphs \cite{strok}.  For more results and application of circulant matrices, see also \cite{davis}. For recent progress on random circulant, reverse circulant matrices, we refer to  \cite{bhs:lsd:heavy}, \cite{bhs:poisson}, \cite{bhs:spectralnorm}, \cite{bhs:lsd}, \cite{bhs:spectralnorm:heavy}, \cite{massey2007}. Hankel matrix is closely related to reverse circulant matrix and Toeplitz matrix. For recent advancement on random Hankel matrix, we refer to \cite{bb2011}, \cite{bryc},\cite{liu2009limit}.  
Symmetric circulant, reverse circulant matrices also have deep connection with free probability theory. Limiting spectral distribution of these matrices are related to different notions of independence - classical independence and half independence (see \cite{BHSannals}, \cite{bhs:half}). However, there is no result in literature  on fluctuations of eigenvalues of circulant, symmetric circulant and  reverse circulant matrices, to the best of our knowledge. In  \cite{liu2012}, fluctuation of eigenvalues of random Hankel is considered. They established the CLT for linear statistics of eigenvalues of  Hankel matrix and  band Hankel matrix. 

The fluctuation problems, we are interested to consider for these patterned matrices, are mainly inspired by the following result on total variation norm convergence of linear spectral statistics of  Toeplitz matrices in \cite{chatterjee}.
\begin{result}[Theorem 4.5 in \cite{chatterjee}]\label{thm:sourav}
Consider the Gaussian Toeplitz matrices $T_n=(a_{i-j})_{i,j=0}^n$, where
 $a_j=a_{-j}$ and $\{a_j\}_{j=0}^{\infty}$ is a sequence of
independent standard Gaussian random variables. Let $p_n$ be a sequence of positive integers such that $p_n=o(\log n /\log \log n)$. Let $A_n = \frac{T_n}{\sqrt{n}}$, then as $n \to \infty$,
$$ \frac{tr(A_n^{p_n})-E[Tr(A_n^{p_n})]}{\sqrt{\var(Tr(A_n^{p_n}))}}\;\;\mbox{ converges in total variation  to $N(0, 1),$} $$
where $\var[X]$ denotes the variance of a random variable $X$.
The CLT also holds for $Tr(f(A_n))$ when $f$ is a fixed nonzero
polynomial with non negative coefficients.
\end{result}

Here we show that the above result holds for circulant matrix, symmetric circulant matrix, reverse circulant matrix and Hankel matrix. We also compute the limiting variance of the linear statistics of eigenvalues of circulant, symmetric circulant and reverse circulant matrices. The limiting variance of the linear statistics of eigenvalues of Hankel matrix is calculated in \cite{liu2012}. Before stating our main results we describe the structure of  these matrices.

A sequence is said to be an {\it input sequence} if the matrices are constructed from the given sequence. We consider the input sequence of the form $\{x_n: n\ge 0\}$. Circulant, symmetric circulant, reverse circulant and Hankel matrices are constructed from this given input sequence. 

\vspace{.2cm}
\noindent{\bf Circulant matrix :} The circulant matrix is defined as 
$$
C_n=\left(\begin{array}{cccccc}
x_0 & x_1 & x_2 & \cdots & x_{n-1} & x_{n-1} \\
x_{n-1} & x_0 & x_1 & \cdots & x_{n-3} & x_{n-2}\\
x_{n-2} & x_{n-1} & x_0 & \cdots & x_{n-4} & x_{n-3}\\
\vdots & \vdots & {\vdots} & \ddots & {\vdots} & \vdots \\
x_1 & x_2 & x_3 & \cdots & x_{n-1} & x_0
\end{array}\right).
$$
For $j=1,2,\ldots, n-1$, its $(j+1)$-th row is obtained by giving its $j$-th row a right circular shift by one positions and the (i,\;j)-th element of the matrix is $x_{(j-i) \mbox{ \tiny{mod} } n}$.

\vspace{.2cm}
\noindent{\bf Symmetric circulant matrix :} The symmetric circulant matrix is defined by
$$
SC_n=\left(\begin{array}{cccccc}
x_0 & x_1 & x_2 & \cdots & x_{2} & x_1 \\
x_1 & x_0 & x_1 & \cdots & x_{3} & x_{2}\\
x_{2} & x_1 & x_0 & \cdots & x_{4} & x_{3}\\
\vdots & \vdots & {\vdots} & \ddots & {\vdots} & \vdots \\
x_1 & x_2 & x_3 & \cdots & x_1 & x_0
\end{array}\right).
$$
For $j=1,2,\ldots, n-1$, its $(j+1)$-th row is obtained by giving its $j$-th row a right circular shift by one positions and the (i,\;j)-th element of the matrix is $x_{\frac{n}{2}-|\frac{n}{2}-|i-j||}$ Also note that symmetric circulant matrix is a Toeplitz matrix with the restriction that $x_{n-j}=x_j$.

\vspace{.2cm}
\noindent{\bf Reverse circulant matrix :} The reverse circulant matrix is defined as 
$$
RC_n=\left(\begin{array}{cccccc}
x_1 & x_2 & x_3 & \cdots & x_{n-1} & x_n \\
x_2 & x_3 & x_4 & \cdots & x_{n} & x_{1}\\
x_3 & x_4 & x_5 & \cdots & x_{1} & x_{2} \\
\vdots & \vdots & {\vdots} & \ddots & {\vdots} & \vdots \\
x_n & x_1 & x_2 & \cdots & x_{n-2} & x_{n-1}
\end{array}\right).
$$
For $j=1,2,\ldots, n-1$, its $(j+1)$-th row is obtained by giving its $j$-th row a left circular shift by one positions. Note that the matrix is symmetric and the (i,\;j)-th element of the matrix is $x_{(i+j-1) \mbox{ \tiny{mod} } n}$.

\vspace{.2cm}
\noindent{\bf Hankel matrix :} The Hankel matrix is defined as 
$$
H_n=\left(\begin{array}{cccccc}
x_1 & x_2 & x_3 & \cdots & x_{n-1} & x_n \\
x_2 & x_3 & x_4 & \cdots & x_{n} & x_{n+1}\\
x_3 & x_4 & x_5 & \cdots & x_{n+1} & x_{n+2} \\
\vdots & \vdots & {\vdots} & \ddots & {\vdots} & \vdots \\
x_n & x_{n+1} & x_{n+2} & \cdots & x_{2n-2} & x_{2n-1}
\end{array}\right).
$$
Note that this matrix is also a symmetric matrix and the (i,\;j)-th element of the matrix is $x_{(i+j-1) }$.

A input sequence $\{X_n:n\geq 0\}$  is said to be a {\it Gaussian input sequence} if the elements of the sequence are i.i.d. standard  normal random variables. In this article, all our  matrices are constructed from Gaussian input sequence. We have the following results on the fluctuation of linear statistics of eigenvalues of random circulant, symmetric circulant, reverse circulant and Hankel matrices. 

\begin{theorem}\label{thm:main:cir}
Let  $A_n$ be  an $n\times n$ circulant matrix or symmetric circulant matrix with Gaussian input sequence. 
Suppose $\{p_n\}$ is a sequence of positive integers such that $p_n=o(\log n/\log\log n)$. 
Then, as $n\to \infty$,
$$
\frac{\Tr(A_n^{p_n})-\E(\Tr(A_n^{p_n}))}{\sqrt{\var(\Tr(A_n^{p_n}))}} \;\mbox{converges in total variation norm to } N(0,1).
$$
\end{theorem}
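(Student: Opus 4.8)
The plan is to use the fact that $C_n$ and $SC_n$ are circulant matrices, hence diagonalised by the fixed Fourier transform, so that $\Tr(A_n^{p_n})$ becomes an \emph{explicit} polynomial of degree $p_n$ in i.i.d.\ standard Gaussians; one then applies Chatterjee's second order Poincar\'e inequality exactly as in the proof of Result~\ref{thm:sourav}. First, for the circulant matrix the eigenvalues are $\lambda_k=\sum_{j=0}^{n-1}x_j\omega^{jk}$ with $\omega=e^{2\pi\mathrm i/n}$, so
\[
\Tr(A_n^{p_n})=n^{-p_n/2}\sum_{k=0}^{n-1}\lambda_k^{p_n}
 = n^{\,1-p_n/2}\!\!\!\sum_{\substack{0\le j_1,\dots,j_{p_n}<n\\ j_1+\dots+j_{p_n}\equiv 0\ (\mathrm{mod}\ n)}}\!\!\! x_{j_1}\cdots x_{j_{p_n}},
\]
which is a real polynomial of degree $p_n$ in the i.i.d.\ standard normals $x_0,\dots,x_{n-1}$; for the symmetric circulant matrix the eigenvalues are the real numbers $\lambda_k=x_0+2\sum_{1\le j\le(n-1)/2}x_j\cos(2\pi jk/n)$ (with the usual modification for even $n$), so $\Tr(A_n^{p_n})$ is again a degree-$p_n$ polynomial in i.i.d.\ standard normals. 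A short direct computation also gives $\var\big(\Tr(A_n^{p_n})\big)\gtrsim p_n!\,n$ (with equality up to a $1+o(1)$ factor in the circulant case, e.g.\ $2n$ when $p_n=2$, resp.\ $4n-4$ for the symmetric circulant); this plays the role of the denominator below.

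Next, write $g$ for this polynomial, $W_n=g(x)$ and $\sigma_n^{2}=\var(W_n)$. Chatterjee's second order Poincar\'e inequality --- the Malliavin--Stein bound underlying Result~\ref{thm:sourav} --- gives
\[
d_{TV}\!\left(\frac{W_n-\E(W_n)}{\sigma_n},\,N(0,1)\right)\ \le\ \frac{2\sqrt5}{\sigma_n^{2}}\,\big(\E\|\nabla g(x)\|^{4}\big)^{1/4}\big(\E\|\nabla^{2} g(x)\|_{\mathrm{op}}^{4}\big)^{1/4}.
\]
Since all the relevant matrices are simultaneously diagonalised by the Fourier basis, the derivatives are explicit: in the circulant case $\partial_j g=p_n n^{-p_n/2}\sum_k\lambda_k^{p_n-1}\omega^{jk}$ and $\partial_j\partial_\ell g=p_n(p_n-1)n^{-p_n/2}\sum_k\lambda_k^{p_n-2}\omega^{(j+\ell)k}$, so that, by Parseval,
\[
\|\nabla g\|^{2}=p_n^{2}\,n^{\,1-p_n}\sum_k|\lambda_k|^{2(p_n-1)},\qquad \|\nabla^{2} g\|_{\mathrm{op}}=p_n(p_n-1)\,n^{\,1-p_n/2}\max_k|\lambda_k|^{\,p_n-2},
\]
with the analogous identities (now with real $\lambda_k$) in the symmetric circulant case.

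Then I would estimate these quantities. For ``generic'' $k$ (i.e.\ $k\ne 0,n/2$) one has $\E|\lambda_k|^{2m}=m!\,n^{m}$, which, with the near-independence of distinct $\lambda_k$'s, yields $\E\|\nabla g\|^{4}\lesssim p_n^{4}\big((p_n-1)!\big)^{2}n^{2}$; the Hessian term is instead governed by the extreme value $\max_k|\lambda_k|^{2}\asymp n\log n$, and using $\E\big[(\max_k|\lambda_k|^{2})^{m}\big]\lesssim(n\log n)^{m}$ for $m=o(\log n)$ gives $\E\|\nabla^{2} g\|_{\mathrm{op}}^{4}\lesssim p_n^{8}(\log n)^{2(p_n-2)}$. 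Feeding these bounds, together with $\sigma_n^{2}\gtrsim p_n!\,n$, into the displayed inequality yields
\[
d_{TV}\!\left(\frac{W_n-\E(W_n)}{\sigma_n},\,N(0,1)\right)\ \lesssim\ \frac{p_n^{2}\,(\log n)^{(p_n-2)/2}}{\sqrt{(p_n-1)!\;n}}\ \longrightarrow\ 0,
\]
which holds precisely when $(p_n-2)\log\log n=o(\log n)$, that is, when $p_n=o(\log n/\log\log n)$ --- exactly the hypothesis. The symmetric circulant case runs identically, the only differences being that $\lambda_k$ is real with $\E\lambda_k^{2}\asymp n$ and $\max_k\lambda_k^{2}\asymp n\log n$.

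The main obstacle is this last step: carrying the dependence on the growing degree $p_n$ through all the estimates, and in particular establishing the extreme-value moment bound $\E[(\max_k|\lambda_k|^{2})^{m}]\lesssim(n\log n)^{m}$ for $m$ as large as $o(\log n)$ --- it is exactly the factor $\max_k|\lambda_k|^{p_n-2}$ in the operator norm of the Hessian that forces the threshold $p_n=o(\log n/\log\log n)$. The remaining ingredients --- the lower bound $\var(\Tr(A_n^{p_n}))\gtrsim p_n!\,n$ and the separate (easy) handling of the exceptional frequencies $k=0$ and, for even $n$, $k=n/2$, at which $\lambda_k\sim N(0,n)$ is real --- are routine but involve some bookkeeping.
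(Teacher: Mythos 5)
Your proposal follows the same skeleton as the paper's proof: Chatterjee's second--order Poincar\'e inequality, a lower bound on $\var(\Tr(A_n^{p_n}))$ of order $n^{p_n+1}$ up to a factor $e^{O(p_n\log p_n)}$, and the spectral--norm bound $\max_k|\lambda_k|=\|C_n\|\lesssim\sqrt{n\log n}$, which is exactly what forces the threshold $p_n=o(\log n/\log\log n)$. The packaging differs in one genuine way: the paper invokes the matrix form of the inequality (Result \ref{res:sourav}), which additionally requires the Gershgorin bound $\|\Sigma_{A_n}\|\le cn$ on the $n^2\times n^2$ covariance matrix of the entries (Lemma \ref{lem:cov}), whereas you apply the raw inequality to the explicit Fourier polynomial in the i.i.d.\ coordinates $x_0,\dots,x_{n-1}$, where the covariance is the identity and the Hessian operator norm is exactly $p_n(p_n-1)\max_k|\lambda_k|^{p_n-2}$; this removes the covariance--matrix step and is a small but real simplification, available precisely because circulant-type matrices are diagonalized by a fixed unitary. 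What you leave unproved, however, is exactly what the paper's supporting lemmas supply: (i) the growing-moment bound $\E\bigl[\|A_n\|^{4p_n}\bigr]\lesssim (C n\log n)^{2p_n}$, which the paper derives from $\E\|A_n\|\le c_1\sqrt{n\log n}$ (Dudley's entropy bound, Lemma \ref{lem:cirnorm}) combined with Gaussian concentration of the Lipschitz map $x\mapsto\|A_n\|$ (Result \ref{res:logsoblev}) --- you correctly identify this as the main obstacle but do not close it; and (ii) the variance lower bound, which you assert as a ``short direct computation.'' The paper proves the weaker but sufficient bound $\var(\Tr(C_n^{p}))\ge n^{p+1}/(12p)^{p}$ by noting that all monomials in the expansion of $\Tr(C_n^{p})$ are positively correlated and restricting to tuples with $i_1+\cdots+i_p=n$; your stronger claim $\sim p_n!\,n^{p_n+1}$ matches Theorem \ref{thm:limvarcirculant} for fixed $p$ but needs the same positive-correlation (or an explicit counting) argument to hold uniformly as $p_n$ grows. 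In short: the route is right and marginally cleaner in the Fourier variables, but the two quantitative lemmas that actually produce the $o(\log n/\log\log n)$ threshold are assumed rather than established.
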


\begin{theorem}\label{thm:main}
Let   $A_n$ be an $n\times n$  reverse circulant matrix or  Hankel matrix with Gaussian input sequence. 
Suppose $\{p_n\}$ is a sequence of positive \textbf{even} integers such that $p_n=o(\log n/\log\log n)$. Then, as $n\to \infty$,
$$
\frac{\Tr(A_n^{p_n})-\E(\Tr(A_n^{p_n}))}{\sqrt{\var(\Tr(A_n^{p_n}))}} \;\mbox{converges in total variation norm to } N(0,1).
$$
\end{theorem}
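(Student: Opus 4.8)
The plan is to handle both matrices through one principle: since the input is a Gaussian sequence, $W_n:=\Tr(A_n^{p_n})$ is a polynomial of degree $p_n$ in i.i.d.\ standard normal variables, and for such polynomials one has the quantitative normal approximation in total variation that underlies Result~\ref{thm:sourav} (a second order Gaussian Poincar\'e inequality, equivalently a Nourdin--Peccati fourth--moment estimate), of the shape
\[
d_{TV}\!\left(\frac{W_n-\E W_n}{\sqrt{\var W_n}},\,N(0,1)\right)\le C(p_n)\,\frac{\mathcal E_n}{\var(W_n)},
\]
where $C(p_n)$ depends only on $p_n$ — concretely it is a fixed power of $p_n!$ (or of $(Cp_n)^{p_n}$), hence $C(p_n)=n^{o(1)}$ precisely because $p_n=o(\log n/\log\log n)$ makes $\log(p_n!)\sim p_n\log p_n=o(\log n)$, and this is where the $\log\log n$ is spent — and $\mathcal E_n$ is a sum of ``matched--path'' quantities built from joint cumulants of the monomials of $W_n$. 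So the whole problem reduces to two path counts: a sharp lower bound on $\var(W_n)$ and an upper bound on $\mathcal E_n$ of strictly smaller order, with room to absorb $C(p_n)$.

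For the reverse circulant matrix I would bypass the general machinery and use exact diagonalization. With $d_k=\sum_j x_j\omega^{jk}$, $\omega=e^{2\pi i/n}$, the eigenvalues of $RC_n$ are $\sum_j x_j$, the value $\sum_j(-1)^j x_j$ when $n$ is even, and the pairs $\pm|d_k|$ for $1\le k<n/2$; the evenness of $p_n$ is exactly what makes each pair contribute $2|d_k|^{p_n}$ instead of cancelling (for odd $p_n$ only the two boundary terms survive and $W_n$ fails to be asymptotically Gaussian, so the hypothesis is genuinely needed). For every $k$ in the bulk range the real and imaginary parts of $d_k$ are \emph{exactly} independent $N(0,n/2)$ variables, independent across $k$, so $E_k:=2|d_k|^2/n$ are i.i.d.\ $\chi^2_2$, and up to the boundary terms and the deterministic scale $(n/2)^{p_n/2}$, $W_n$ is a sum of $\sim n/2$ i.i.d.\ copies of $Y=(\chi^2_2)^{p_n/2}$. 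One then invokes a triangular--array local central limit theorem: $Y$ has a density on $(0,\infty)$ and all moments finite, and Stirling's formula shows the relevant Lyapunov--type ratio is of order $c^{p_n}/\sqrt n$, which tends to $0$ under $p_n=o(\log n/\log\log n)$. Finally one checks that the two boundary terms, divided by $\sqrt{\var W_n}\asymp\sqrt n\,(n/2)^{p_n/2}$, tend to $0$ in probability, and that adding such a term preserves total variation convergence, using the $L^1$--equicontinuity of translates of the (nearly Gaussian) approximating densities.

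For the Hankel matrix no diagonalization is available, so here I would run the combinatorial route. Writing
\[
\Tr(H_n^{p_n})=\sum_{i_1,\dots,i_{p_n}} x_{i_1+i_2-1}x_{i_2+i_3-1}\cdots x_{i_{p_n}+i_1-1},
\]
one groups terms by the partition they induce on the multiset of edge labels $i_r+i_{r+1}-1$. The leading contribution to $\var(W_n)$ comes from the pair partitions of two superimposed closed $p_n$--paths — precisely the computation carried out in \cite{liu2012}, which gives both the order and the limiting constant of $\var(W_n)$, and which I would take as input. The remaining task is to bound $\mathcal E_n$, i.e.\ to count the ways four closed $p_n$--paths can have their edge labels matched so as to form a single connected constraint component and to show this is of smaller order than $\var(W_n)^2$; this is a delicate but standard path count in the spirit of the Bose--Sen and Bryc--Dembo--Jiang estimates for patterned matrices, and the evenness of $p_n$ enters through the identification of the dominant matchings (odd cycle lengths change the free--index count). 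Substituting the two bounds into the Poincar\'e inequality and using $p_n=o(\log n/\log\log n)$ to kill $C(p_n)$ completes the argument.

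The main obstacle is the Hankel fourth--moment/path--counting step: showing that the connected four--path count is genuinely of lower order, which is where the Hankel pattern is really used and where the growth restriction is forced, since the combinatorial bounds naturally carry factors of size $(Cp_n)^{p_n}$ that must be beaten by a fixed negative power of $n$. For the reverse circulant case the obstacle is milder but not empty: upgrading the i.i.d.-sum CLT from weak to total variation convergence, uniformly as the summand law itself drifts with $n$, requires a local limit theorem with explicit control of the characteristic function of $(\chi^2_2)^{p_n/2}$ away from the origin, together with the approximation argument that discards the boundary eigenvalue terms without destroying total variation convergence.
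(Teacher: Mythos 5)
Your high-level engine --- a quantitative second-order Gaussian Poincar\'e bound $d_{TV}\le C(p_n)\,\mathcal E_n/\var(W_n)$ with the hypothesis $p_n=o(\log n/\log\log n)$ spent on absorbing factors like $(Cp_n)^{p_n}=n^{o(1)}$ --- is exactly what the paper runs (Result \ref{res:sourav}). But the two steps you defer are precisely where your plan either has a gap or is much harder than necessary. For the Hankel matrix: (i) you propose to import the variance asymptotics from \cite{liu2012}, but that computation is for \emph{fixed} $p$ and yields a limiting constant, not a bound uniform in $p$; since $p_n\to\infty$ you need a lower bound with explicit $p$-dependence (the constant genuinely decays like $p^{-cp}$ and must be beaten by the $\sqrt n$ gain). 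The paper proves this itself (Lemma \ref{lem:lowvar}: $\var[\Tr(A_n^{p})]\ge n^{p+1}/(3p)^{p+1}$ for even $p$), using that all monomials in the trace expansion are positively correlated for Gaussian inputs and then exhibiting, via a partition of the index range into blocks of length $n/(3p)$, disjoint groups of terms each contributing variance at least one. (ii) The ``connected four-path count'' you name as the main obstacle is avoided entirely: Chatterjee's proposition packages the fourth-moment quantity as $\|\Sigma_{A_n}\|^{3/2}\,\E\|A_n\|^{4(p-1)}\cdots$, so the only upper bounds needed are the Gershgorin bound $\|\Sigma_{A_n}\|\le n$ (Lemma \ref{lem:cov}) and the spectral-norm bound $\E\|A_n\|\le C\sqrt{n\log n}$ from Dudley's entropy bound plus Gaussian concentration (Theorem \ref{thm:normcir}, Lemma \ref{lem:cirnorm}). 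No connected-graph enumeration is required; as written, your Hankel argument leaves its hardest step unproven.

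For the reverse circulant matrix your route is genuinely different from the paper's (which treats $RC_n$ by the same Poincar\'e argument, writing $\Tr(RC_n^{2p})=n\sum X_{i_1}\cdots X_{i_{2p}}$ over a congruence-constrained index set). The diagonalization into i.i.d.\ $\chi^2_2$ variables is correct, and it cleanly explains why evenness of $p_n$ is forced. But the payoff step --- a total-variation local CLT for a triangular array whose summand law $(\chi^2_2)^{p_n/2}$ drifts with $n$ --- is real work you flag without supplying: the density of $(\chi^2_2)^{p/2}$ has a $y^{2/p-1}$ singularity at the origin, so its characteristic function decays only like $|t|^{-2/p}$ and one needs on the order of $p_n$ convolutions before the sum even has a bounded density, with all estimates needing uniformity in $p_n$. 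Since the Hankel case forces you to set up the Poincar\'e machinery anyway and $RC_n$ satisfies the same three lemmas, the diagonalization buys insight (and the nice observation about odd $p$) but not economy.
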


\noindent In the next result, we compute the exact rate of variance of $\Tr(C_n^p)$ for a fixed positive integer $p$.  
\begin{theorem}\label{thm:limvarcirculant}
Fix a positive integer $p$. Then 
$$
\lim_{n\to \infty}\frac{\var[\Tr(C_n^p)]}{n^{p+1}}=p! \sum_{s=0}^{p-1}f_p(s),
$$ 
where $f_p$ is the density of the random variable $U_1+U_2+\cdots+U_p$ and $U_1,U_2,\ldots, U_p$ are i.i.d. uniform random variables on $[0,1]$. 
\end{theorem}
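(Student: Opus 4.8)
The plan is to use the explicit diagonalization of $C_n$ by the discrete Fourier transform. Recall that, with $\omega=e^{2\pi i/n}$, the eigenvalues of $C_n$ are $\lambda_\ell=\sum_{k=0}^{n-1}x_k\,\omega^{k\ell}$ for $\ell=0,1,\dots,n-1$, so, using $\sum_{\ell=0}^{n-1}\omega^{m\ell}=n\,\one_{\{n\mid m\}}$,
\begin{equation*}
\Tr(C_n^p)=\sum_{\ell=0}^{n-1}\lambda_\ell^p=\sum_{\ell=0}^{n-1}\ \sum_{k_1,\dots,k_p=0}^{n-1}x_{k_1}\cdots x_{k_p}\,\omega^{(k_1+\cdots+k_p)\ell}=n\sum_{\mathbf{k}\in\mathcal{S}_n}x_{k_1}\cdots x_{k_p},
\end{equation*}
where $\mathcal{S}_n=\{\mathbf{k}=(k_1,\dots,k_p)\in\{0,1,\dots,n-1\}^p:k_1+\cdots+k_p\equiv0\ (\mathrm{mod}\ n)\}$. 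Thus $\Tr(C_n^p)$ is a real homogeneous degree-$p$ polynomial in the i.i.d.\ standard Gaussians $x_0,\dots,x_{n-1}$, and hence
\begin{equation*}
\var\!\big[\Tr(C_n^p)\big]=n^2\sum_{\mathbf{k},\mathbf{k}'\in\mathcal{S}_n}\Cov\!\big(x_{k_1}\cdots x_{k_p},\,x_{k'_1}\cdots x_{k'_p}\big).
\end{equation*}

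The next step is to evaluate these covariances. Since the $x_j$ are independent centred Gaussians, $\Cov(x_{k_1}\cdots x_{k_p},x_{k'_1}\cdots x_{k'_p})$ vanishes unless every index occurs with even multiplicity in the combined list $(k_1,\dots,k_p,k'_1,\dots,k'_p)$. The dominant contribution comes precisely from the pairs in which the entries of $\mathbf{k}$ are pairwise distinct and $\mathbf{k}'$ is a permutation of $\mathbf{k}$: if the entries of $\mathbf{k}$ are distinct, each of its $p$ values must be supplied an odd number of times by $\mathbf{k}'$, which forces $\mathbf{k}'$ (having only $p$ entries) to be a rearrangement of $\mathbf{k}$, and then $\mathbf{k}'$ lies in $\mathcal{S}_n$ automatically and the covariance equals $\E[x_{k_1}^2\cdots x_{k_p}^2]=1$. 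Every other pair with non-zero covariance has a repeated entry in $\mathbf{k}$ or in $\mathbf{k}'$; combining a repeat with the congruence $k_1+\cdots+k_p\equiv0$ (and its analogue for $\mathbf{k}'$) forces the number of such pairs to be $O(n^{p-2})$ — here one uses that a single linear congruence in $b$ unknowns modulo $n$ has $O(n^{b-1})$ solutions and that a repeat costs a free coordinate — and as $p$ is fixed these covariances are uniformly bounded, so all remaining pairs contribute $O(n^{p-2})$ in total. Therefore
\begin{equation*}
\var\!\big[\Tr(C_n^p)\big]=n^2\Big(p!\,\#\{\mathbf{k}\in\mathcal{S}_n:\text{entries of }\mathbf{k}\text{ are distinct}\}+O(n^{p-2})\Big).
\end{equation*}

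It then remains to count the distinct-entry tuples in $\mathcal{S}_n$ asymptotically. Because $0\le k_1+\cdots+k_p\le p(n-1)$, the congruence forces $k_1+\cdots+k_p=sn$ for some $s\in\{0,1,\dots,p-1\}$, and the tuples with a repeated entry are again only $O(n^{p-2})$, so the count equals $\sum_{s=0}^{p-1}\#\{\mathbf{k}\in\{0,\dots,n-1\}^p:k_1+\cdots+k_p=sn\}+O(n^{p-2})$. For fixed $s$ this count is the coefficient of $z^{sn}$ in $\big((1-z^n)/(1-z)\big)^{p}$, namely $\sum_{j\ge0}(-1)^j\binom{p}{j}\binom{sn-jn+p-1}{p-1}$ (the last binomial being $0$ when $sn-jn<0$), and expanding $\binom{sn-jn+p-1}{p-1}=\frac{n^{p-1}}{(p-1)!}(s-j)^{p-1}+O(n^{p-2})$ shows this equals $\frac{n^{p-1}}{(p-1)!}\sum_{j=0}^{\lfloor s\rfloor}(-1)^j\binom{p}{j}(s-j)^{p-1}+O(n^{p-2})=n^{p-1}f_p(s)+O(n^{p-2})$, since the last displayed sum is precisely the Irwin--Hall density of $U_1+\cdots+U_p$ evaluated at $s$ (equivalently, one may invoke the local limit theorem for sums of i.i.d.\ lattice random variables). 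Summing over $s$ and feeding this back gives $\var[\Tr(C_n^p)]=p!\,n^{p+1}\sum_{s=0}^{p-1}f_p(s)+O(n^{p})$, and dividing by $n^{p+1}$ yields the claim.

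The step I expect to be the main obstacle is the error estimate in the second paragraph: one must check that \emph{every} configuration $(\mathbf{k},\mathbf{k}')$ with non-zero covariance other than the ``distinct $\mathbf{k}$ with $\mathbf{k}'$ a permutation'' ones really is $O(n^{p-2})$ in number. This requires a case analysis over the multiplicity patterns of the combined list $(k_1,\dots,k_p,k'_1,\dots,k'_p)$, with care taken over when the two congruences $\sum k_i\equiv0$ and $\sum k'_i\equiv0$ fail to be linearly independent, in which case one must verify that the pattern has already lost an additional free coordinate. The generating-function asymptotics in the third paragraph are routine, but should be phrased with the (here harmless) uniformity over the finitely many values of $s$.
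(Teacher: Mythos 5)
Your proposal is correct and follows essentially the same route as the paper: the same trace formula $\Tr(C_n^p)=n\sum_{k_1+\cdots+k_p\equiv 0}x_{k_1}\cdots x_{k_p}$, the same identification of the dominant pairs (distinct-entry tuples matched by a permutation, yielding the factor $p!$), the same $O(n^{p-2})$ dismissal of configurations with repeated indices, and the same generating-function count of $\{k_1+\cdots+k_p=sn\}$ leading to the Irwin--Hall density (this is exactly the paper's Lemma~\ref{lem:cardinality}). The only cosmetic difference is that you work with covariances directly instead of separately showing the squared mean is negligible, and the combinatorial error estimate you flag as the delicate point is treated at the same (brief) level of detail in the paper.
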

The distribution of $U_1+\cdots+U_p$ is known as Irwin-Hall distribution. Clearly, the random variable is supported on $[0,p]$. The density function $f_p$ of $(U_1+U_2+\cdots+U_p)$ is given by
\begin{align}\label{eqn:density}
f_p(x)=\frac{1}{(p-1)!}\sum_{k=0}^{\lfloor x \rfloor}(-1)^k\binom{p}{k}(x-k)^{p-1}, \;\;\mbox{ when $x\in [0,p]$},
\end{align}
where $\lfloor x \rfloor$ denotes the largest integer not exceeding $x$.

In the next result, we  compute the exact rate of variance for reverse circulant matrix when $p_n$ is a fixed even integer.  The limiting variance for linear statistics of eigenvalues of  Hankel matrix is calculated in Theorem 6.4 of \cite{liu2012}. 
\begin{theorem}\label{thm:reverseciculant}
Fix a positive integer $p$. Then 
$$
\lim_{n\to\infty}\frac{\var[\Tr(RC_n^{2p})]}{n^{2p+1}}=\sum_{k=2}^pc_kg(k)+2c_1,
$$
 where $c_k=\l(\binom{p}{p-k}^2(p-k)!\r)^2$ and $g(k)$ is given by
 $$
 g(k)=\frac{1}{(2k-1)!}\sum_{s=-(k-1)}^{k-1}\sum_{j=0}^{k+s-1}(-1)^j\binom{2k}{j}(k+s-j)^{2k-1}(2-{\bf 1}_{\{s=0\}})k!k!.
 $$
\end{theorem}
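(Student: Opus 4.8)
I would prove this by diagonalising $RC_n$ in the discrete Fourier basis, which reduces $\var(\Tr(RC_n^{2p}))$ to an elementary moment computation. Re-index the input as $x_0,\dots,x_{n-1}$, put $\omega=e^{2\pi i/n}$ and $\lambda_k=\sum_{j=0}^{n-1}x_j\omega^{jk}$. Since $RC_n$ sends the Fourier vector $v_k=(\omega^{ak})_{a=0}^{n-1}$ to $\lambda_k v_{-k}$, one has $RC_n^2v_k=|\lambda_k|^2v_k$, so
$$
\Tr(RC_n^{2p})=\sum_{k=0}^{n-1}|\lambda_k|^{2p}=\lambda_0^{2p}+\mathbf{1}_{\{2\mid n\}}\,\lambda_{n/2}^{2p}+2\sum_{k=1}^{\lfloor (n-1)/2\rfloor}|\lambda_k|^{2p}.
$$
The $\lambda_k$ are jointly complex Gaussian with $\E[\lambda_k\overline{\lambda_{k'}}]=n\,\mathbf{1}_{\{k=k'\}}$ and $\E[\lambda_k\lambda_{k'}]=n\,\mathbf{1}_{\{k+k'\equiv 0\ (n)\}}$; hence $\lambda_0$ (and $\lambda_{n/2}$ when $2\mid n$) is a real $N(0,n)$, $\lambda_{n-k}=\overline{\lambda_k}$, and $\lambda_1,\dots,\lambda_{\lfloor(n-1)/2\rfloor}$ are i.i.d. circularly symmetric complex Gaussians with $\E|\lambda_k|^2=n$ --- in particular mutually independent and independent of $\lambda_0,\lambda_{n/2}$. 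So the displayed sum is a sum of independent terms and its variance is the sum of the variances. The one point needing care here is this joint-law computation; independence of the summands is then immediate.

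Now $|\lambda_k|^2/(n/2)$ is a $\chi^2_2$ variable, so $\E|\lambda_k|^{2q}=(n/2)^q\,\E(\chi^2_2)^q=(n/2)^q\,2^q\,q!=n^q q!$; thus $\var(|\lambda_k|^{2p})=n^{2p}\big((2p)!-(p!)^2\big)$ for $1\le k\le\lfloor(n-1)/2\rfloor$, while $\var(\lambda_0^{2p})$ and $\var(\lambda_{n/2}^{2p})$ are $O(n^{2p})$ (both $\lambda$'s being $N(0,n)$). Summing,
$$
\var\big(\Tr(RC_n^{2p})\big)=4\,\big\lfloor\tfrac{n-1}{2}\big\rfloor\,n^{2p}\big((2p)!-(p!)^2\big)+O(n^{2p})=2\,n^{2p+1}\big((2p)!-(p!)^2\big)\,(1+o(1)),
$$
so the limit equals $2\big((2p)!-(p!)^2\big)=2(p!)^2\big(\binom{2p}{p}-1\big)=\sum_{k=1}^{p}2\,c_k\,(k!)^2$ with $c_k$ as in the statement; the summation index $k$, the factor $2$, and the $c_k$ are exactly the multiplicities produced by the alternative combinatorial derivation below.

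The shape of the stated answer points to that alternative route, which is where the Irwin--Hall densities enter and which I would also carry out as a cross-check. Expanding $|\lambda_k|^{2p}=\lambda_k^p\overline{\lambda_k}^p$ and summing the geometric series over $k$ gives the exact identity $\Tr(RC_n^{2p})=n\sum x_{j_1}\cdots x_{j_p}x_{l_1}\cdots x_{l_p}$, the sum over $(j_\bullet,l_\bullet)\in\{0,\dots,n-1\}^{2p}$ with $j_1+\cdots+j_p\equiv l_1+\cdots+l_p\ (\mathrm{mod}\ n)$; hence $\var(\Tr(RC_n^{2p}))=n^2\var(Z)$ for this multilinear form $Z$, and Wick's formula expands $\var(Z)$ into a sum over the pair partitions of the $4p$ Gaussian factors of two copies of $Z$ that link the two copies. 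The top-order ($n^{2p-1}$) partitions are organised by the number $k\in\{1,\dots,p\}$ of ``active'' cross-pairs: the $2(p-k)$ inactive indices are summed off freely, and the surviving modular constraint on the $2k$ active indices is counted, residue class by residue class, by $\sum_{|m|\le k-1}f_{2k}(k+m)$ --- the Irwin--Hall density of a sum of $2k$ uniforms, which adds to $1$ over this range by Poisson summation. The main obstacle --- the one step the spectral computation avoids --- is the precise enumeration and multiplicity of these top-order partitions: keeping track of both orientations of each cross-matching, of the inner matchings of the inactive factors, and of degenerate small cases such as $k=1$. That bookkeeping is what produces the $c_k$'s, and it is the one step that must be done with great care.
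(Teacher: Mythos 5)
Your spectral route is genuinely different from the paper's argument: the paper writes $\Tr(RC_n^{2p})=n\sum_{B_{2p}}X_{i_1}\cdots X_{i_{2p}}$ and runs a Wick-type enumeration of cross-matchings, whereas you diagonalise $RC_n^2$ in the Fourier basis. The first half of your proposal is correct and complete as it stands: $\Tr(RC_n^{2p})=\sum_k|\lambda_k|^{2p}$, the $\lambda_k$ with $1\le k\le\lfloor (n-1)/2\rfloor$ are i.i.d.\ with $|\lambda_k|^2$ distributed as $n$ times a standard exponential and are independent of the real Gaussians $\lambda_0,\lambda_{n/2}$, whence $\var(\Tr(RC_n^{2p}))=2n^{2p+1}\bigl((2p)!-(p!)^2\bigr)(1+o(1))$. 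This is shorter than the paper's proof and yields a clean closed form.

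The problem is your last step: the assertion that $2\bigl((2p)!-(p!)^2\bigr)=\sum_{k=1}^p 2c_k(k!)^2$ ``is exactly'' the stated right-hand side is left unchecked, and it is false for the $g(k)$ defined in the theorem. Writing $f_{2k}$ for the Irwin--Hall density, the theorem's $g(k)$ equals $(k!)^2\sum_{|s|\le k-1}(2-\mathbf{1}_{\{s=0\}})f_{2k}(k+s)$; since $\sum_{t\in\mathbb Z}f_{2k}(t)=1$ and $f_{2k}(0)=f_{2k}(2k)=0$, this is $(k!)^2\bigl(2-f_{2k}(k)\bigr)$, so the stated limit is $\sum_{k=1}^p2c_k(k!)^2-\sum_{k=2}^pc_k(k!)^2f_{2k}(k)$, strictly smaller than your value for every $p\ge2$. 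At $p=2$ the theorem gives $c_2g(2)+2c_1=16/3+32=112/3$, while you get $2(24-4)=40$. A direct Wick expansion at $p=2$ (eight complete cross-pairings each contributing $n^3$, plus thirty-two pairings with one inner pair on each side each contributing $n^3$, and $\var(\Tr(RC_n^4))=n^2\var(S)$) confirms $40$, so your value is the correct one; the discrepancy comes from the paper's factor $(2-\mathbf{1}_{\{s=0\}})$ in the complete-cross-matching count, which should be $2$ also in the stratum $s=0$ (exchanging the roles of odd and even positions gives a second, distinct family of matchings there as well). So your argument proves a corrected version of the theorem rather than the theorem as stated; you cannot claim agreement with the displayed formula without verifying it, and the combinatorial cross-check you defer is exactly where this mismatch would have surfaced.
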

%
%
The next theorem provides  the exact rate of variance of $\Tr(SC_n^p)$ for a fixed positive integer $p$. 
\begin{theorem}\label{thm:scvariance}
Fix a  integer $p$.  
(i) Then for $p=2m+1$,
\begin{align*}
&\lim_{n\to \infty}\frac{\var[\Tr(SC_n^p)]}{n^{p+1}}\\
&=(2m+1)^2\binom{2m}{m}^2(m!)^2\frac{1}{2^{2m}}+\sum_{k=1}^{m}\frac{a_k}{2^{2(m-k)}}\sum_{\ell=0}^{2k+1}\binom{2k+1}{\ell}^2 \ell!(2k+1-\ell)! \ h_{2k+1}(\ell),
\end{align*}
where $a_k=(\binom{2m+1}{2m-2k}\binom{2m-2k}{m-k} (m-k)!)^2$ and 
\begin{align*}
h_{2k+1}(\ell)= \frac{1}{(2k)!}\sum_{s=-\lceil\frac{2k+1-\ell}{2}\rceil}^{\lfloor\frac{\ell}{2}\rfloor}\sum_{q=0}^{2s+2k+1-\ell}(-1)^q\binom{2k+1}{q}\l(\frac{2s+2k+1-\ell-q}{2}\r)^{2k}.
\end{align*}
(ii) For $p=2m$,
$$
\lim_{n\to \infty}\frac{\var[\Tr(SC_n^p)]}{n^{p+1}} =\sum_{k=2}^{m}\frac{b_k}{2^{2(m-k)}}\l(\sum_{\ell=0,\ell\neq k}^{2k}\binom{2k}{\ell}^2 \ell!(2k-\ell)! \  h_{2k}(\ell)+\binom{2k}{k}^2g(k)\r)+\frac{b_1}{2^{(2m-1)}},
$$
where $b_k=(\binom{2m}{2m-2k}\binom{2m-2k}{m-k}(m-k)!)^2$ and $g(k)$ as in the Theorem \ref{thm:reverseciculant}.
\end{theorem}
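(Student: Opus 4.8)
The plan is to use that the symmetric circulant is itself a circulant matrix: $SC_n$ has first row $(c_0,c_1,\dots,c_{n-1})$ with $c_\ell=x_{\min(\ell,\,n-\ell)}$, so its eigenvalues are $\lambda_k=\sum_{\ell=0}^{n-1}c_\ell\omega^{k\ell}$, $\omega=e^{2\pi i/n}$, and
$$\Tr(SC_n^p)=\sum_{k=0}^{n-1}\lambda_k^p=n\!\!\sum_{\substack{d_0,\dots,d_{p-1}\in\{0,\dots,n-1\}\\ d_0+\cdots+d_{p-1}\equiv 0\ (\mathrm{mod}\ n)}}\!\!x_{g(d_0)}\cdots x_{g(d_{p-1})},\qquad g(d):=\min(d,\,n-d).$$
Then $\var[\Tr(SC_n^p)]=n^2\sum\Cov\bigl(\prod_t x_{g(d_t)},\ \prod_t x_{g(d'_t)}\bigr)$, the sum running over pairs $(\vec d,\vec d')$ each with vanishing sum mod $n$. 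I would expand each covariance by the Wick/Isserlis formula: it equals the sum, over non-split pair partitions $\pi$ of the $2p$ slots $\{0,\dots,p-1\}\cup\{0',\dots,(p-1)'\}$, of $\prod_{\{s,t\}\in\pi}\mathbf 1[g(d_s)=g(d_t)]=\prod_{\{s,t\}\in\pi}\mathbf 1[d_s\equiv\pm d_t\ (\mathrm{mod}\ n)]$. Assigning to each pair a common value $v\in\{0,\dots,\lfloor n/2\rfloor\}$ and to each of the $2p$ slots a sign recording whether its $d$ equals $v$ or $n-v$, the count of admissible $(\vec d,\vec d')$ for a fixed $(\pi,\text{signs})$ becomes the number of ways to choose the $p$ values so that the two congruences $\sum_{t}(\pm v_{\mathrm{pair}(t)})\equiv 0$ (over the unprimed, resp.\ the primed, slots) both hold.

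The core of the argument is to extract the contributions of order $n^{p+1}$. Generically there are $p$ free values and two independent congruences, hence $\asymp n^{p-2}$ solutions and only $\asymp n^{p}$ after the prefactor $n^2$; the order $n^{p+1}$ is attained precisely when the two congruences collapse to one. This occurs either when $\pi$ pairs the unprimed slots bijectively with the primed ones (so the second congruence is a signed rearrangement of the first), or when $\pi$ has within-walk pairs carried by opposite signs, each of which cancels out of its congruence and frees a parameter. I would organise the surviving $(\pi,\text{signs})$ around a ``core'' of $2k+1$ slots (for $p=2m+1$) or $2k$ slots (for $p=2m$) that carries the cross-linking, the remaining $2(m-k)$ slots being split into opposite-sign within-walk pairs; the ways to select the core and to form and orient these pairs on both walks give exactly $\binom{2k+1}{\ell}^2\ell!(2k+1-\ell)!$ for the core (with $\ell$ the number of ``$+$'' slots) and $a_k$, resp.\ $b_k$, together with the power $2^{-2(m-k)}$, for the rest. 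For the core block the residual congruence pins its signed value-sum to an integer multiple $sn$; since $v_i/n$ is asymptotically uniform on $[0,\tfrac12]$, the number of solutions on the $s$-th ``sheet'', divided by $n^{p-1}$, tends to the value at $s$ of the density of the corresponding signed sum of independent uniforms, i.e.\ to the Irwin--Hall-type quantity $h_{2k+1}(\ell)$, resp.\ $h_{2k}(\ell)$; the sub-case in which the core behaves like a reverse-circulant closed walk is, after the identical sheet computation, governed by the function $g(k)$ of Theorem~\ref{thm:reverseciculant}. Summing over $k$ and $\ell$ and separating the two parities of $p$ assembles the two displayed formulas.

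The step I expect to be the real obstacle is this last one: identifying exactly which $(\pi,\text{signs})$ reach the leading order, and then carrying out the constrained lattice-point count sheet by sheet to produce the precise closed forms $h_{2k+1}(\ell),h_{2k}(\ell),g(k)$ with the correct multiplicities. This demands careful bookkeeping of how the sign pattern interacts with the two congruences---in particular that equal-sign within-walk pairs and cross pairs behave differently---together with a uniform estimate showing every other configuration is $O(n^{p})$, and a separate but parallel treatment of $n$ even and $n$ odd, the extra eigenvalue $\lambda_{n/2}$ in the even case contributing only below the leading order. The passage from the exact number of solutions of $\sum_t d_t=sn$ in a box to the limiting density is a routine local-limit/Fourier computation; the boundary values $g(d)\in\{0,n/2\}$, where the two signs coincide, are what produce the $2-\mathbf 1_{\{s=0\}}$ factor and the floor/ceiling truncations in $h$, and they affect only the $O(1)$ coefficient on each sheet, not the power of $n$.
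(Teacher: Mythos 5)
Your proposal follows essentially the same route as the paper: both reduce $\Tr(SC_n^p)$ to a sum over signed congruences $\sum_i \epsilon_i j_i \equiv 0 \pmod n$ with $1\le j_i\le n/2$ (your $g(d)=\min(d,n-d)$ bookkeeping is the paper's expansion $\lambda_k=X_0+2\sum_j X_j\cos(\omega_k j)$ in disguise), identify the leading-order configurations as within-walk opposite-sign pairs plus an odd (resp.\ even) cross-matched core of size $2k+1$ (resp.\ $2k$), and evaluate the core sheet-by-sheet via Irwin--Hall-type densities, which is exactly the paper's Lemma \ref{lem:sc}. The one point to make explicit is that for $p=2m+1$ the degenerate core consisting of a single cross-matched $x_0$ slot in each walk is what produces the standalone first term $(2m+1)^2\binom{2m}{m}^2(m!)^2 2^{-2m}$; your framework accommodates it, but it must be kept separate from the $h_{2k+1}(\ell)$ sum rather than folded into it.
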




The spectral norm of the random matrices play a crucial role in the proofs of Theorems \ref{thm:main:cir} and \ref{thm:main}. In \cite{meckes}, Meckes showed that the spectral norm of random Toeplitz matrix is the order of $\sqrt{n\log n} $  and mentioned that the spectral norm of random Hankel matrix is also of the same order. He also pointed out that the methods of his paper can be used to treat  random Toeplitz matrix with extra restrictions, for example, random symmetric circulant matrix which is a Toeplitz matrix with the restriction that $x_{n-j}=x_j$.    Using his method in \cite{meckes}, we show that the spectral norm of random circulant, reverse circulant and symmetric circulant  matrices with Gaussian input sequence are  of the order of $\sqrt{n\log n }$. As a consequence of this we get the order of the spectral norm of random Hankel matrix with Gaussian input sequence. We denote the spectral norm of a matrix $A$ by $\|A\|$.

\begin{theorem}\label{thm:normcir}
Let $A_n$ be an  $n\times n$ matrix among circulant, symmetric circulant, reverse circulant and Hankel matrices with Gaussian input sequence. Then 
\begin{align*}
\limsup_{n \to \infty}\frac{\|A_n\|}{\sqrt{n\log n}}\le C \;\;\mbox{ a.s.},
\end{align*}
where $C$ is a positive constant.
\end{theorem}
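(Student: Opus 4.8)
The plan is to exploit the explicit Fourier diagonalisation available for the three circulant-type matrices in order to reduce $\|A_n\|$ to a maximum of at most $n$ Gaussian linear forms, and then to transfer the resulting bound to the Hankel matrix by an embedding. \emph{Step 1 (explicit spectra).} A circulant matrix is normal, $C_n=F_n^{*}\,\mathrm{diag}(\lambda_0,\dots,\lambda_{n-1})\,F_n$ with $F_n$ the unitary discrete Fourier transform, so $\|C_n\|=\max_{0\le\ell<n}|\lambda_\ell|$ with $\lambda_\ell=\sum_{j=0}^{n-1}x_j\omega^{j\ell}$ and $\omega=e^{2\pi i/n}$. The symmetric circulant $SC_n$ is itself a circulant matrix — each row is a right cyclic shift of the palindrome $(x_0,x_1,\dots,x_1)$ — so the same formula holds, now with $\lambda_\ell$ real (a linear combination of $x_0,\dots,x_{\lfloor n/2\rfloor}$ with cosine coefficients, plus an $(-1)^{\ell}x_{n/2}$ term when $n$ is even). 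For the reverse circulant, writing $RC_n=PC_n$ with $P$ the cyclic-reversal permutation ($P^{2}=I$) and using that conjugation by $P$ preserves circulants, one finds that $RC_n^{2}$ is circulant with symbol $|\lambda_\ell|^{2}$; since $RC_n$ is symmetric this gives $\|RC_n\|^{2}=\|RC_n^{2}\|=\max_\ell|\lambda_\ell|^{2}$. In all three cases $\Re\lambda_\ell$ and $\Im\lambda_\ell$ are centred, jointly Gaussian and mutually uncorrelated, with variances $O(n)$ (from the orthogonality relations for the discrete Fourier transform). Hence $|\lambda_\ell|^{2}$ is stochastically dominated by $Cn$ times a $\chi^{2}$ variable with at most two degrees of freedom, so $\P(|\lambda_\ell|>t\sqrt n)\le 2e^{-ct^{2}}$ for an absolute constant $c>0$.

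\emph{Step 2 (union bound and Borel--Cantelli).} Taking $t=K\sqrt{\log n}$ and summing the tail estimate over the at most $n$ indices $\ell$,
\[
\P\big(\|A_n\|>K\sqrt{n\log n}\,\big)\le 2n\,e^{-cK^{2}\log n}=2\,n^{\,1-cK^{2}}
\]
for $A_n\in\{C_n,SC_n,RC_n\}$. Choosing $K$ so that $cK^{2}>2$ makes the right-hand side summable in $n$, and the Borel--Cantelli lemma yields $\limsup_{n\to\infty}\|A_n\|/\sqrt{n\log n}\le K$ almost surely.

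\emph{Step 3 (Hankel by embedding).} Finally, $H_n=(x_{i+j-1})_{i,j=1}^{n}$ is exactly the top-left $n\times n$ block of the $2n\times 2n$ reverse circulant matrix built from $(z_0,z_1,\dots,z_{2n-1})$ with $z_j=x_j$ for $1\le j\le 2n-1$ and $z_0$ an extra independent standard Gaussian (no index wrap-around occurs, since $i+j-1\le 2n-1<2n$). The operator norm of a submatrix never exceeds that of the ambient matrix, so $\|H_n\|\le\|RC_{2n}\|$, and applying the reverse-circulant bound of Step 2 along the subsequence $n\mapsto 2n$ gives $\limsup_{n\to\infty}\|H_n\|/\sqrt{n\log n}\le K\sqrt{2}$ almost surely.

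The argument is essentially soft once the Fourier pictures are in place; the two points that need care are the correct (sub-exponential, $\chi^{2}$-type rather than Gaussian) tail for $|\lambda_\ell|^{2}$, and arranging the Hankel-into-reverse-circulant embedding so that the enlarged input is genuinely an i.i.d.\ Gaussian vector. As an alternative to Step 1, one can instead follow the method of \cite{meckes} and run a net-and-Gaussian-concentration argument directly on each matrix; the diagonalisations above simply shortcut this for the circulant family and then feed Hankel from it.
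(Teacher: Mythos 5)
Your proof is correct, but it reaches the bound by a genuinely different route than the paper. The paper follows Meckes: it first proves $\E\|A_n\|\le c_1\sqrt{n\log n}$ via Dudley's entropy bound applied to the continuous subgaussian process $Y_x=\sum_j X_j e^{2\pi i xj}$, $x\in[0,1]$ (Lemma \ref{lem:cirnorm}), then upgrades the expectation bound to an almost-sure statement by Gaussian concentration for the Lipschitz map $(X_0,\dots,X_{n-1})\mapsto\|A_n\|$ (Result \ref{res:logsoblev}) together with Borel--Cantelli. You instead exploit that the norm of a circulant-type matrix is \emph{exactly} a maximum over the $n$ Fourier frequencies, so a chaining argument over the whole circle is unnecessary: each $\lambda_\ell$ is a centred Gaussian (or two-dimensional Gaussian) with variance $O(n)$, a union bound over the $n$ indices gives $\P(\|A_n\|>K\sqrt{n\log n})\le 2n^{1-cK^2}$, and Borel--Cantelli finishes. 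Your identification of $SC_n$ as a circulant with palindromic symbol, the reduction $\|RC_n\|=\max_\ell|\lambda_\ell|$, and the embedding $\|H_n\|\le\|RC_{2n}\|$ all match the paper's reductions and are sound (your check that no index wrap-around occurs in the embedding is the right thing to verify). What each approach buys: yours is more elementary, avoids Dudley entirely, and yields explicit constants; the paper's chaining-plus-concentration scheme is the one that generalizes beyond the exactly diagonalizable family (it is what handles Toeplitz matrices in \cite{meckes}, where the norm is only dominated by a supremum over all of $[0,1]$ rather than over $n$ grid points) and, as the paper's final remark notes, extends verbatim to independent symmetric uniformly subgaussian inputs, whereas your Step 1 leans on exact joint Gaussianity and uncorrelatedness of $\Re\lambda_\ell$ and $\Im\lambda_\ell$ (though even without uncorrelatedness, separate Gaussian tails for the real and imaginary parts would salvage the same bound).
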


The rest of our paper is organized as follows. In Section \ref{sec:proof}, we give the proofs of Theorem \ref{thm:main:cir} and Theorem \ref{thm:main}. In  Section \ref{sec:limvar}, we give the proofs of Theorem \ref{thm:limvarcirculant}, Theorem \ref{thm:reverseciculant} and Theorem \ref{thm:scvariance}. In the appendix, we  give a  proof of Theorem \ref{thm:normcir} and a few  norm related results of pattered matrices. 

\section{Proofs of Theorem \ref{thm:main:cir} and Theorem \ref{thm:main}}\label{sec:proof}
In this section we prove of Theorem \ref{thm:main:cir} and Theorem \ref{thm:main}. The key ingredient in both the proofs is Proposition 4.4 in \cite{chatterjee}. We state a version of the result which will be used in the proofs of our theorems.   Suppose we have a collection $A_n=(a_{ij})_{1\le i, j\le n}$ of jointly Gaussian random variables with mean zero and $n^2\times n^2$ covariance matrix $\Sigma_{A_n}$. 
\begin{result}[Proposition 4.4 in \cite{chatterjee}]\label{res:sourav}
Fix a positive integer $p$. Suppose $W_n=\Tr (A_n^p)$ and $\sigma^2=\var(W_n)$. Let $Z_n$ be a normal random variable with  same mean and variance as $W_n$. Then 
$$
d_{TV}(W_n,Z_n)\le \frac{2\sqrt 5 \|\Sigma_{A_n}\|^{3/2}ab}{\sigma^2 n},
$$
where $a=p(\E (\|A_n\|)^{4(p-1)})^{\frac{1}{4}}$, $b=p(p-1)(\E(\|A_n\|)^{4(p-2)})^{\frac{1}{4}}$ and $d_{TV}(W_n,Z_n)$ denotes the total variation distance between $W_n$ and $Z_n$.
\end{result}

The next two   lemmas give a lower bound for $\var[\Tr(C_n^p)]$, $\var[\Tr(SC_n^p)]$, $\var[\Tr(RC_n^{2p})]$ and $\var[\Tr(H_n^{2p})]$. 
\begin{lemma}\label{lem:lowvar:cir}
Let $A_n$ be an $n\times n$ circulant matrix or symmetric  circulant matrix with Gaussian input sequence. Then for large $n$,
$$
\var[\Tr(A_n^p)]\ge \frac{n^{p+1}}{(12p)^{p}}
$$
where $p\ge 2$, a fixed positive integer.
\end{lemma}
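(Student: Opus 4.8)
The plan is to derive the lower bound from a Poincar\'e-type inequality together with the circulant structure shared by $C_n$ and $SC_n$. Both matrices have the form $A_n=\sum_{d=0}^{n-1}x_{\tau(d)}P_d$, where $P_d$ is the cyclic shift matrix $((P_d)_{ij}=\mathbf 1_{\{(j-i)\equiv d\bmod n\}})$ and $\tau(d)=d$ for the circulant while $\tau(d)=\min(d,n-d)$ for the symmetric circulant; thus $\Tr(A_n^p)$ is a polynomial of degree $p$ in the i.i.d.\ standard Gaussians $x_0,x_1,\dots$. I will use the elementary fact that any polynomial $f$ of degree $\le p$ in such variables satisfies $\var(f)\ge \tfrac1p\,\E\|\nabla f\|^2$: expanding $f=\sum_{\mathbf m}c_{\mathbf m}\prod_j H_{m_j}(x_j)$ in the monic Hermite basis (so $\E[H_aH_b]=a!\,\delta_{ab}$ and $H_a'=aH_{a-1}$) gives $\var(f)=\sum_{\mathbf m\ne\mathbf 0}c_{\mathbf m}^2\prod_j m_j!$ and $\E\|\nabla f\|^2=\sum_{\mathbf m}\big(\sum_j m_j\big)\,c_{\mathbf m}^2\prod_j m_j!\le p\,\var(f)$, since $\sum_j m_j\le p$. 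Hence it is enough to bound $\sum_{k}\E\big[(\partial_{x_k}\Tr(A_n^p))^2\big]$ from below.

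Next I would compute these derivatives using the product rule $P_aP_b=P_{(a+b)\bmod n}$ and $\Tr(P_aP_b)=n\,\mathbf 1_{\{a+b\equiv0\}}$. The matrix $A_n^{p-1}$ is again circulant, $A_n^{p-1}=\sum_m \gamma^{(p-1)}_m P_m$, with (indices mod $n$)
\[
\gamma^{(p-1)}_m=\!\!\sum_{\substack{\mathbf d\in\{0,\dots,n-1\}^{p-1}\\ d_1+\cdots+d_{p-1}\equiv m\bmod n}}\!\! x_{\tau(d_1)}\cdots x_{\tau(d_{p-1})},
\]
a polynomial with positive integer coefficients. From $\partial_{x_k}\Tr(A_n^p)=p\,\Tr\big((\partial_{x_k}A_n)\,A_n^{p-1}\big)$ and $\partial_{x_k}A_n=\sum_{d:\tau(d)=k}P_d$ one gets $\partial_{x_k}\Tr(C_n^p)=pn\,\gamma^{(p-1)}_{n-k}$ for the circulant, and $\partial_{x_k}\Tr(SC_n^p)=pn\big(\gamma^{(p-1)}_{n-k}+\gamma^{(p-1)}_{k}\big)$ for the symmetric circulant and $0<k<n/2$.

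The crucial estimate is then $\E\big[(\gamma^{(p-1)}_m)^2\big]\ge n^{p-2}$ for each residue $m$. Indeed $\E[(\gamma^{(p-1)}_m)^2]$ is a sum over pairs $(\mathbf d,\mathbf d')$ of the expectations $\E\big[\prod_l x_{\tau(d_l)}\prod_l x_{\tau(d'_l)}\big]$, and each such expectation is that of a monomial in i.i.d.\ standard Gaussians, hence $\ge 0$; retaining only the diagonal terms $\mathbf d'=\mathbf d$, each contributing $\E\big[(\prod_l x_{\tau(d_l)})^2\big]=\prod_v(2a_v-1)!!\ge 1$ (with $a_v$ the multiplicity of $v$ among $\tau(d_1),\dots,\tau(d_{p-1})$), gives a lower bound of $\#\{\mathbf d:\sum_l d_l\equiv m\bmod n\}=n^{p-2}$. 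Summing: for $C_n$, $\sum_{k=0}^{n-1}\E[(\partial_{x_k}\Tr(C_n^p))^2]=p^2n^2\sum_{m=0}^{n-1}\E[(\gamma^{(p-1)}_m)^2]\ge p^2n^2\cdot n\cdot n^{p-2}=p^2n^{p+1}$; for $SC_n$, using $\E[(\partial_{x_k}\Tr(SC_n^p))^2]\ge p^2n^2\,\E[(\gamma^{(p-1)}_k)^2]\ge p^2n^p$ (the cross term $\E[\gamma^{(p-1)}_{n-k}\gamma^{(p-1)}_{k}]$ is $\ge0$, both factors having nonnegative coefficients) and summing over the $\lfloor(n-1)/2\rfloor\ge n/4$ indices $1\le k<n/2$ gives $\ge \tfrac14 p^2n^{p+1}$ for $n$ large. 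In either case the inequality of the first paragraph yields $\var(\Tr(A_n^p))\ge \tfrac14 p\, n^{p+1}$ for all large $n$; since $\tfrac14 p\,(12p)^p\ge 1$, this is $\ge n^{p+1}/(12p)^p$.

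The one step I expect to require real care is the derivative computation of the second paragraph — verifying $\Tr\big((\partial_{x_k}A_n)A_n^{p-1}\big)=n\sum_{d:\tau(d)=k}\gamma^{(p-1)}_{n-d}$ and, for $SC_n$, noting that $x_k$ and $x_{n-k}$ enter $SC_n$ together, so $\partial_{x_k}$ picks up two shifts and only about $n/2$ variables are available, which costs a harmless constant. Everything is uniform in $p\ge2$ and needs no parity hypothesis (unlike Theorem \ref{thm:main}). The constant obtained is far from optimal; the sharp asymptotics of $\var(\Tr(C_n^p))$ and $\var(\Tr(SC_n^p))$ are Theorems \ref{thm:limvarcirculant} and \ref{thm:scvariance}, and they would follow from the same argument by retaining the entire top Wiener chaos of $\Tr(A_n^p)$ rather than just its diagonal.
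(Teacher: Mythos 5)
Your proof is correct, but it follows a genuinely different route from the paper's. The paper writes $\Tr(C_n^p)=n\sum_{A_p}X_{i_1}\cdots X_{i_p}$, observes that any two monomials in independent centered Gaussians are nonnegatively correlated, and therefore lower-bounds the variance by that of the sub-sum over $\{i_1+\cdots+i_p=n\}$, whose $\binom{n-1}{p-1}$ terms each have variance at least $1$; for $SC_n$ it reduces to the Toeplitz-type sum $\sum_{i_1,\ldots,i_p\le n/2}X_{|i_1-i_2|}\cdots X_{|i_p-i_1|}$ and imports the bound from equation (12) of Chatterjee's proof of his Theorem 4.5 -- that citation is the sole source of the constant $(12p)^p$. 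You instead use the inverse Poincar\'e inequality $\var(f)\ge\frac1p\,\E\|\nabla f\|^2$ for Gaussian polynomials of degree at most $p$ (correctly derived from the Hermite expansion), compute $\partial_{x_k}\Tr(A_n^p)=p\,\Tr\bigl((\partial_{x_k}A_n)A_n^{p-1}\bigr)$ via the shift algebra $P_aP_b=P_{a+b}$, and lower-bound $\E[(\gamma^{(p-1)}_m)^2]$ by its $n^{p-2}$ diagonal terms -- note that this last step uses exactly the same nonnegativity of Gaussian monomial expectations that drives the paper's positive-correlation argument, only applied to $\gamma^{(p-1)}_m$ rather than to the trace itself. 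What your route buys: a single self-contained argument covering $C_n$ and $SC_n$ simultaneously, no parity restriction, and a substantially stronger bound of order $p\,n^{p+1}$ (tight for $p=2$, where $\var[\Tr(C_n^2)]=2n^3$ exactly), which would even marginally improve the quantitative rate in Theorem \ref{thm:main:cir}. The cost is the chaos/Hermite machinery, which the paper avoids. Minor points to tidy in a final write-up: state explicitly that degree $\le p$ as a polynomial forces $\sum_j m_j\le p$ for all nonzero Hermite coefficients (since the $H_a$ are monic), and handle the boundary indices $k=0$ and $k=n/2$ of $SC_n$ by simply discarding them, as you implicitly do.
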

\begin{lemma}\label{lem:lowvar}
Let $A_n$ be   an $n\times n$ reverse circulant matrix or  Hankel matrix with Gaussian input sequence. Then for large $n$,
$$
\var[\Tr(A_n^{p})]\ge \frac{n^{p+1}}{(3p)^{p+1}}
$$
where $p\ge 2$, a fixed positive even integer.
\end{lemma}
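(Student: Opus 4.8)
The plan is to bound $\var[\Tr(A_n^{p})]$ from below by the variance of the degree-$p$ (top) component in the Hermite/Wiener--chaos expansion of $\Tr(A_n^{p})$, whose covariance structure is manifestly nonnegative, and then to keep only a conveniently countable subfamily of the resulting terms. Write $\Tr(A_n^{p})=\sum_{\mathbf{i}}x_{\lambda_1(\mathbf{i})}\cdots x_{\lambda_p(\mathbf{i})}$, where $\mathbf{i}=(i_1,\dots,i_p)$ runs over the $n^{p}$ closed walks of length $p$ (convention $i_{p+1}:=i_1$) and the ``link'' $\lambda_k(\mathbf{i})$ is the index of the entry $a_{i_k i_{k+1}}$: for the reverse circulant matrix $\lambda_k(\mathbf{i})\equiv i_k+i_{k+1}\ (\mathrm{mod}\ n)$, and for the Hankel matrix $\lambda_k(\mathbf{i})=i_k+i_{k+1}-1$. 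In particular $\Tr(A_n^{p})$ is a polynomial of degree $p$ in the i.i.d.\ standard Gaussians $\{x_m\}$.

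Replace each monomial $x_{\lambda_1(\mathbf{i})}\cdots x_{\lambda_p(\mathbf{i})}$ by its Wick ordering $\Wi{x_{\lambda_1(\mathbf{i})}\cdots x_{\lambda_p(\mathbf{i})}}$; the sum of these is exactly the projection of $\Tr(A_n^{p})$ onto the $p$-th Wiener chaos, and since Hermite components of different degrees are uncorrelated,
\[
\var[\Tr(A_n^{p})]\ \ge\ \var\Big[\sum_{\mathbf{i}}\Wi{x_{\lambda_1(\mathbf{i})}\cdots x_{\lambda_p(\mathbf{i})}}\Big]\ =\ \sum_{\mathbf{i},\mathbf{i}'}\Cov\big(\Wi{x_{\lambda_1(\mathbf{i})}\cdots x_{\lambda_p(\mathbf{i})}},\,\Wi{x_{\lambda_1(\mathbf{i}')}\cdots x_{\lambda_p(\mathbf{i}')}}\big).
\]
By Wick's theorem (only cross-contractions survive, since a Wick-ordered product has no internal contractions), each summand equals $\per\big(\one[\lambda_k(\mathbf{i})=\lambda_\ell(\mathbf{i}')]\big)_{k,\ell=1}^{p}$, hence is a nonnegative integer, and it is at least $1$ whenever the link sequences $\lambda(\mathbf{i})$ and $\lambda(\mathbf{i}')$ coincide.

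Discarding every pair whose link sequences differ,
\[
\var[\Tr(A_n^{p})]\ \ge\ \#\{(\mathbf{i},\mathbf{i}'):\lambda(\mathbf{i})=\lambda(\mathbf{i}')\}\ =\ \sum_{\ell}r_\ell^{\,2},
\]
the sum over link sequences $\ell$ realized by some walk, with $r_\ell=\#\{\mathbf{i}:\lambda(\mathbf{i})=\ell\}$; by Cauchy--Schwarz this is at least $\big(\sum_\ell r_\ell\big)^{2}/N_n=n^{2p}/N_n$, where $N_n$ is the number of realized link sequences. It remains to bound $N_n$, and here the evenness of $p$ is essential: telescoping $\lambda_k(\mathbf{i})=i_k+i_{k+1}+c$ around the closed walk yields
\[
\sum_{k=1}^{p}(-1)^{k-1}\lambda_k(\mathbf{i})=0,
\]
a congruence mod $n$ for the reverse circulant matrix and an honest integer identity for the Hankel matrix, since with $p$ even the boundary terms $i_1$, $i_{p+1}=i_1$ and the constant $c$ all cancel (for $p$ odd the left side would depend on $i_1$ and the argument fails). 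Thus a realized link sequence is determined by $\lambda_1,\dots,\lambda_{p-1}$, so $N_n\le n^{p-1}$ for the reverse circulant matrix and $N_n\le(2n-1)^{p-1}$ for the Hankel matrix, whence $\var[\Tr(A_n^{p})]\ge n^{2p}/(2n)^{p-1}=n^{p+1}/2^{p-1}\ge n^{p+1}/(3p)^{p+1}$ for all $p\ge2$ --- the asserted bound, with much to spare (the case $p=2$ needs no separate treatment, and for the reverse circulant matrix one can note that each admissible link sequence is hit by exactly $n$ walks, giving $\var[\Tr(A_n^{p})]\ge n^{p-1}\cdot n^{2}=n^{p+1}$ directly).

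The main, and rather modest, obstacle is bookkeeping: keeping the two matrix families aligned, one computation taking place in $\mathbb{Z}_n$ and the other in $\mathbb{Z}$, and verifying the telescoping identity cleanly. The one genuinely essential ingredient is the permanent/Wick positivity --- the nonnegativity of the covariance of two $p$-th Wick powers --- but that is a standard property of Hermite expansions, so I do not anticipate a serious difficulty there.
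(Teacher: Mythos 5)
Your proof is correct, and it takes a genuinely different route from the paper's. The paper's argument rests on the fact that the monomials $a_{i_1i_2}\cdots a_{i_pi_1}$ appearing in the trace are pairwise positively correlated (via $\E[X^{\alpha+\beta}]\ge \E[X^{\alpha}]\E[X^{\beta}]$ for a centered Gaussian), which licenses discarding all but a hand-picked sub-sum: indices restricted to $[1,n/3]$ to remove the mod-$n$ wraparound, then partitioned into disjoint classes $D_{a_1,\ldots,a_p}$ whose link values are forced into disjoint ranges of length $n/(3p)$, each class contributing $|D_{a_1,\ldots,a_p}|^2=(n/(3p))^2$ to the variance, with $(n/(3p))^{p-1}$ classes available. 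You instead project onto the top Wiener chaos, use the permanent formula for covariances of Wick products to get term-by-term nonnegativity, keep only the pairs of walks with identical link sequences, and close with Cauchy--Schwarz. Both proofs hinge on the same combinatorial fact --- for even $p$ the alternating sum of the links of a closed walk telescopes to zero, so the last link is determined by the first $p-1$ (the paper uses this to compute $|D_{a_1,\ldots,a_p}|$; you use it to bound the number of realized link sequences by $(2n)^{p-1}$). Your version buys cleaner bookkeeping (no range restrictions, both matrices handled uniformly) and a sharper constant, $n^{p+1}/2^{p-1}$, indeed $n^{p+1}$ for the reverse circulant, which matches the true order given by Theorem \ref{thm:reverseciculant}; the cost is heavier machinery (the chaos decomposition and the Wick/permanent identity) that is specifically Gaussian, whereas the paper's positive-correlation step needs only independence and symmetry of the entries. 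If you write this up, make explicit the two load-bearing standard facts: $\var(W)=\sum_{k\ge 1}\var(P_kW)$ by orthogonality of the chaoses, and the permanent of the $0$--$1$ matching matrix dominates its diagonal product, which equals $1$ when the two link sequences agree.
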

The next lemma gives an upper bound on the spectral norm of the covariance matrix $\Sigma_{A_n}$ where $A_n$ is one of the  matrix  among circulant, reverse circulant and Hankel matrices.
\begin{lemma}\label{lem:cov}
Suppose $A_n$ is one of the matrix among random circulant, symmetric circulant,  reverse circulant and Hankel matrices with Gaussian input sequence. Then 
$$
\|\Sigma_{A_n}\|\le cn,
$$
where $c=1$ for $C_n,RC_n, H_n$ and $c=2$ for $SC_n$.\end{lemma}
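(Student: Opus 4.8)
The plan is to exploit the very rigid structure of $\Sigma_{A_n}$ that results from the input sequence being i.i.d.\ standard Gaussian. Partition the set of matrix positions $\{1,\dots,n\}^2$ into classes according to which input variable sits at that position; for the symmetric circulant we treat $x_k$ and $x_{n-k}$ as the same variable, since the matrix forces $x_{n-k}=x_k$. Writing $S_v$ for the class of positions holding variable $v$ and $e_v\in\{0,1\}^{n^2}$ for its indicator vector, independence and unit variance give $\Cov(a_{ij},a_{kl})=1$ when $(i,j)$ and $(k,l)$ lie in a common class and $0$ otherwise, so
\[
\Sigma_{A_n}=\sum_v e_v e_v^{T}.
\]
The vectors $e_v$ have pairwise disjoint supports, so the matrices $e_v e_v^{T}$ have mutually orthogonal ranges; equivalently, after a simultaneous permutation of rows and columns $\Sigma_{A_n}$ is block diagonal with blocks equal to the all-ones matrices $J_{|S_v|}$. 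Since such a permutation does not change the spectral norm and $\|J_m\|=m$, we get $\|\Sigma_{A_n}\|=\max_v|S_v|$, and the lemma reduces to bounding the largest class size in each case.

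For the circulant matrix the position $(i,j)$ holds $x_{(j-i)\bmod n}$, so the classes are indexed by $d\in\{0,\dots,n-1\}$ and each class $\{(i,j):(j-i)\equiv d\pmod n\}$ has exactly $n$ elements (one valid $j$ for every $i$); hence $\|\Sigma_{C_n}\|=n$. The reverse circulant matrix is identical with $(j-i)\bmod n$ replaced by $(i+j-1)\bmod n$, so again every class has size $n$ and $\|\Sigma_{RC_n}\|=n$. For the Hankel matrix the position $(i,j)$ holds $x_{i+j-1}$, and these indices $1,\dots,2n-1$ are all distinct, so the classes are indexed by $s=i+j\in\{2,\dots,2n\}$ with $|S_s|=\#\{(i,j):1\le i,j\le n,\ i+j=s\}$, which equals $s-1$ for $2\le s\le n+1$ and $2n+1-s$ for $n+1\le s\le 2n$. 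The maximum of these numbers is $n$, attained at $s=n+1$, so $\|\Sigma_{H_n}\|\le n$.

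The symmetric circulant matrix is the one case where the bound is not simply $n$, and this is precisely the source of the constant $c=2$: the identification $x_k=x_{n-k}$ merges the two circulant classes $d=\ell$ and $d=n-\ell$ into a single class $S_\ell$. For $\ell=0$, and also for $\ell=n/2$ when $n$ is even, this merged class coincides with a single circulant class and still has $n$ elements; for every other $\ell$ the two merged classes are disjoint and $|S_\ell|=2n$. Hence $\max_\ell|S_\ell|\le 2n$ and $\|\Sigma_{SC_n}\|\le 2n$.

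I do not anticipate a real obstacle: once one recognises that $\Sigma_{A_n}$ is, up to relabelling indices, a block-diagonal sum of all-ones matrices, the proof is a short counting exercise. The only place demanding a little care is the symmetric circulant, where one must confirm that distinct reduced levels $\ell$ correspond to genuinely independent input variables (so that the off-diagonal blocks really vanish) and keep track of which levels are fixed by the folding $\ell\mapsto n-\ell$; the parity of $n$ enters the argument only through this bookkeeping.
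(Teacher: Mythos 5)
Your proof is correct and follows essentially the same route as the paper: identify the $0$--$1$ covariance structure of $\Sigma_{A_n}$ determined by which positions share an input variable, and then bound the norm by the maximal class size ($n$ for $C_n$, $RC_n$, $H_n$ and $2n$ for $SC_n$). The only cosmetic difference is that the paper invokes the Gershgorin row-sum bound while you observe the block-diagonal all-ones structure and obtain the exact value $\|\Sigma_{A_n}\|=\max_v|S_v|$, which is a slightly sharper statement of the same count.
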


\noindent Assuming Lemma \ref{lem:lowvar:cir} and Lemma \ref{lem:cov},  now we  prove Theorem \ref{thm:main:cir}. 

\begin{proof}[Proof of Theorem \ref{thm:main:cir}] 
Let $A_n$ be an $n\times n$ circulant matrix $C_n$ or symmetric  circulant matrix $SC_n$ with the Gaussian input sequence. Using Lemma \ref{lem:lowvar:cir} and Lemma \ref{lem:cov} in Result \ref{res:sourav}, we have 
\begin{align*}
d_{TV}(W_n,Z_n)\le \frac{2\sqrt 5(2n)^{3/2}(12p)^{p}p^3(\E\|A_n\|^{4p})^{1/2}}{n^{p+1}n},
\end{align*} 
where $W_n=\Tr (A_n^p)$ and $Z_n$ is a normal random variable with  same mean and variance as $W_n$.
Now from Theorem \ref{thm:normcir}, we have
\begin{align*}
d_{TV}(W_n,Z_n)\le \frac{C'(12C)^p p^{p+3} n^{3/2} (n\log n)^p}{n^{p+2}}=\frac{C' (12C)^p p^{p+3} (\log n)^p}{\sqrt n},
\end{align*}
where $C'$ is a constant does not depend on $p$ and $n$. Clearly, if $p=o(\log n/\log\log n)$, then 
$d_{TV}(W_n,Z_n)$ goes to zero as $n\to \infty$. Hence the result.
\end{proof}

\vspace{.2cm}
\noindent Proof of Theorem \ref{thm:main} is similar to the proof of Theorem \ref{thm:main:cir}. One has to use Lemma \ref{lem:lowvar} instead of Lemma \ref{lem:lowvar:cir} to prove Theorem \ref{thm:main}. Here we skip the details. But we note that the result in Lemma \ref{lem:lowvar} is true when 	 $p$ is an even integer and hence, the result in Theorem \ref{thm:main} holds when $p_n$ is a sequence of even integers. 
Now we give the proofs of the lemmas stated in this section.
\vspace{.2cm}
\begin{proof}[Proof of Lemma \ref{lem:lowvar:cir}]
\noindent{\bf Circulant matrix :} 
Let $\lambda_1,\lambda_2,\ldots,\lambda_{n}$ be the eigenvalues of the random circulant matrix $C_n$ with Gaussian input sequence. Then $\lambda_k$'s are given (see \cite{bhs:lsd}, Section 2.2) by
\begin{align*}
\lambda_k=\sum_{j=0}^{n-1}X_je^{i\omega_kj}=\sum_{j=0}^{n-1}X_j\cos(\omega_k j)+i\sum_{j=1}^{n}X_j\sin(\omega_k j),
\end{align*}
where $\omega_k=\frac{2\pi k}{n}$ and  $k=1,2,\ldots, n$. Now observe that $\Tr(C_n^p)$ is given by
\begin{equation}\label{trace:C_n}
\Tr(C_n^p)=n\sum_{A_p}X_{i_1}\cdots X_{i_p},
\end{equation}
where $A_p=\{(i_1,\ldots,i_p)\suchthat i_1+\cdots+i_p=0\;{(\mbox{mod $n$})},\; 0\le i_1,\ldots, i_p\le n-1\}.$  We show that all the terms in the above sum are positively correlated. Note, 
\begin{align*}
\Cov\l(\prod_{i=1}^n X_i^{\alpha_i},\prod_{i=1}^n X_i^{\beta_i}\r)=\prod_{i=1}^n \E[ X_i^{\alpha_i+\beta_i}]-\prod_{i=1}^n \E [X_i^{\alpha_i}]\E[X_i^{\beta_i}].
\end{align*}
Now, if $\alpha_i+\beta_i$ is odd, then $\E[ X_i^{\alpha_i+\beta_i}]= \E [X_i^{\alpha_i}]\E[X_i^{\beta_i}]=0$. If $\alpha_i$ and $\beta_i$ are both odd, then $\E [X_i^{\alpha_i+\beta_i}]\ge 0$ and $\E [X_i^{\alpha_i}]=\E[X_i^{\beta_i}]=0$. Finally, if $\alpha_i$ and $\beta_i$ are both even, then 
by H$\ddot{o}$lder's inequality
\begin{align*}
\E [X_i^{\alpha_i}]\E[X_i^{\beta_i}]\le \l(\E[X_i^{\alpha_i+\beta_i}]\r)^{\frac{\alpha_i}{\alpha_i+\beta_i}}\l(\E[X_i^{\alpha_i+\beta_i}]\r)^{\frac{\beta_i}{\alpha_i+\beta_i}}=\E[X_i^{\alpha_i+\beta_i}].
\end{align*}
Therefore, in all the cases, we have 
\begin{align*}
\E[ X_i^{\alpha_i+\beta_i}]\ge \E [X_i^{\alpha_i}]\E[X_i^{\beta_i}]. 
\end{align*}
Hence the terms in the sum  \eqref{trace:C_n} are positively correlated. Therefore 
\begin{align*}
\var(\Tr((C_n)^p))\ge n^2\var\l(\sum_{A_p'}X_{i_1}\cdots X_{i_p}\r),
\end{align*}
where $A_p'=\{(i_1,\ldots,i_p)\suchthat i_1+\cdots+i_p=n,\; 1\le i_1,\ldots, i_p\le n\}.$ Note that $|A_p'|$, the cardinality of $A_p'$,  is $\binom{n-1}{p-1}$ and $\var(X_i^{\alpha_i})\ge 1$ for $\alpha_i\ge 1$. Therefore 
\begin{align*}
\var(\Tr((C_n)^p))\ge n^2 |A_p'|=n^2\binom{n-1}{p-1}\ge n^2 \frac{(n-p)^{p-1}}{(p-1)!}.
\end{align*}
This completes the proof for circulant matrix.

\vspace{.2cm}
\noindent{\bf Symmetric Circulant matrix :} Let $SC_n$ be an $n\times n$ symmetric circulant matrix with Gaussian input sequence and $p$ be a fixed positive integer. Then 
\begin{align*}
\Tr(SC_n^p)=\sum_{i_1,\ldots, i_p=1}^na_{i_1i_2}a_{i_2i_3}\cdots a_{i_pi_1},
\end{align*}
where $a_{ij}=X_{\tiny\frac{n}{2}-|\tiny\frac{n}{2}-|i-j||}$. Note that the terms in the last equation are positively correlated and $a_{ij}=X_{|i-j|}$ when $|i-j|\le \tiny\frac{n}{2}$. Therefore, we have 
\begin{align*}
\var[\Tr(SC_n^p)]\ge \var\l[ \sum_{i_1,\ldots, i_p=1}^{n/2}X_{|i_1-i_2|}X_{|i_2-i_3|}\ldots X_{|i_p-i_1|}\r].
\end{align*}
Now following the argument given to prove equation (12)  in the proof of Theorem 4.5 of \cite{chatterjee}, we have
$$
\var[\Tr(SC_n^p)]\ge \frac{n^{p+1}}{(12p)^p}.
$$
This completes the proof.
\end{proof}

\begin{proof}[Proof of Lemma \ref{lem:lowvar}]
\noindent{\bf Reverse circulant matrix :} Consider random reverse circulant matrix $RC_n=(a_{ij})_{1\le i,j\le n}$, where $a_{ij}=X_{i+j-1\;\tiny\mbox{(mod $n$)}}$ and $X_1,\ldots, X_n$ are i.i.d. standard normal random variables. Then 
\begin{align*}
\Tr((RC_n)^p)=\sum_{i_1,\ldots,i_p=1}^{n}a_{i_1i_2}a_{i_2i_3}\cdots a_{i_pi_1}.
\end{align*}
As all the terms in the above sum are positively correlated, we have 
\begin{align}\label{eqn:var}
\var(\Tr((RC_n)^p))&=\var\l(\sum_{i_1,\ldots,i_p=1}^{n}a_{i_1i_2}a_{i_2i_3}\cdots a_{i_pi_1}\r)\nonumber
\\&\ge \var\l(\sum_{i_1,\ldots,i_p=1}^{n/3}X_{i_1+i_2-1}X_{i_2+i_3-1}\cdots X_{i_p+i_1-1}\r).
\end{align}
Let
\begin{align*}
D_{a_1,\ldots,a_p}=\{ (i_1,\ldots,i_p)\suchthat1\leq i_1\leq \frac{n}{3p}, i_k+i_{k+1}-1=a_k,k=1,\ldots,p\},
\end{align*}
where $i_{p+1}=i_1$ and $(a_1,\ldots,a_p)\in \mathcal A$ and 
$$
\mathcal A=\l\{(a_1,\ldots,a_p)\in \mathbb N^p\suchthat\frac{kn}{3p}+1< a_k\le \frac{(k+1)n}{3p},k=1,2,\ldots,p-1\r\}.
$$ 
Here we write  $\frac{n}{3p},\ldots, \frac{n}{3}$ as if integers, which do not effect the asymptotic. Observe that the set $D_{a_1,\ldots,a_p}$ are disjoint sets for different values of $(a_1,\ldots,a_p)$. Therefore from \eqref{eqn:var}, we have
\begin{align}\label{eqn:var:lower}
\var(\Tr((RC_n)^p))&\ge \sum_{\mathcal A}\var\l(\sum_{D_{a_1,\ldots,a_p}}X_{i_1+i_2-1}X_{i_2+i_3-1}\cdots X_{i_p+i_1-1}\r).
\end{align}
Now, for a fixed value of $a_p$, $|\mathcal A|=(\frac{n}{3p})^{p-1}$. Again, from the definition of $a_1,a_2,\ldots,a_p$, we have
$$i_p=\left\{
\begin{array}{ccc}
a_{p-1}-a_{p-2}+\cdots-a_2+a_1-i_1+1 &\mbox{if}& p \mbox{ is even,}\\ 
a_{p-1}-a_{p-2}+\cdots+a_2-a_1+i_1  &\mbox{if}& p \mbox{ is odd.}
\end{array}
\right.
$$
This implies that $a_p=i_p+i_1-1=a_{p-1}-a_{p-2}+\cdots-a_2+a_1$, when $p$ is even. Therefore, if $p$ is even then $a_p$ is determined by $a_1,a_2,\ldots,a_{p-1}$ and it does not depend on $i_1$. But, if $p$ is odd then $a_p$ depends on $a_1,a_2,\ldots,a_{p-1}$ and $i_1$.

So, if $p$ is even then  for a fixed choice of $a_1,\ldots,a_{p-1}$ the number of elements in $D_{a_1,\ldots,a_p}$ is same as the number of ways of choosing $i_1$. Hence 
$$
|D_{a_1,\ldots,a_p}|=\frac{n}{3p}, \mbox{ if } p \mbox{ is even.}
$$ 
 Thus, when $p$ is even, from \eqref{eqn:var:lower} we get
 $$\var(\Tr((RC_n)^p))\ge |\mathcal A||D_{a_1,\ldots,a_p}|^2 \var(X_{a_1}\cdots X_{a_p})\ge\frac{n^{p+1}}{(3p)^{p+1}}.$$
In the last inequality we have used the fact that $\var(X_1)=1$ and  $\var(X_1^2)=2$. This completes the proof for reverse circulant matrix.

\vspace{.2cm}
\noindent{\bf Hankel matrix :} The structure of Hankel matrix ($(i,j)$-th entry  is $X_{i+j-1}$) is very close to the structure of reverse circulant matrix ($(i,j)$-th entry  is $X_{i+j-1\;\tiny\mbox{(mod $n$)}}$). Following  the similar argument given for reverse circulant matrix,  one can show that, for $p$ even, 
$$
\var(\Tr((H_n)^p))\ge \frac{n^{p+1}}{(3p)^{p+1}}.
$$
Hence the result.
\end{proof}

\begin{proof}[Proof of Lemma \ref{lem:cov}]
The random matrices $C_n,RC_n$ and $H_n$ are constructed from a Gaussian input sequence. 
Therefore elements of $\Sigma_{C_n}$ are given by
$$
\sigma_{ij,kl}=\l\{\begin{array}{ll}
1 & \mbox{ if } j-i=l-k \;\;(\mbox{mod } n)\\ 0 & \mbox{otherwise,}
\end{array}\r.
$$
for $1\le i,j,k,l\le n$. Note that the entries of $\Sigma_{RC_n}$ are given by
$$
\sigma_{ij,kl}=\l\{\begin{array}{ll}
1 & \mbox{ if } i+j=k+l \;\;(\mbox{mod } n)\\ 0 & \mbox{otherwise,}
\end{array}\r.
$$
for $1\le i,j,k,l\le n$. Similarly the entries of $\Sigma_{H_n}$ are given by
$$
\sigma_{ij,kl}=\l\{\begin{array}{ll}
1 & \mbox{ if } i+j=k+l\\ 0 & \mbox{otherwise,}
\end{array}\r.
$$
for $1\le i,j,k,l\le n$. Observe that, in all the above cases the number of $1$ in a row of $\Sigma_{A_n}$ is at most $n$.
Therefore, by Gershgorin bound for the operator norm, 
\begin{align*}
\|\Sigma_{A_n}\|\le \max_{i,j}\sum_{k,l}|\sigma_{ij,kl}|\le n,
\end{align*}
where $A_n$ is one of the matrix among random circulant, reverse circulant and Hankel matrices with Gaussian input sequence.

For symmetric circulant matrix  with Gaussian input sequence,  elements of $\Sigma_{SC_n}$ are given by
$$
\sigma_{ij,kl}=\l\{\begin{array}{ll}
1 & \mbox{ if } |\frac{n}{2}-|i-j|| =|\frac{n}{2}-|k-l|| \\ 0 & \mbox{otherwise,}
\end{array}\r.
$$
Observe that the number of $1$ in a row of $\Sigma_{SC_n}$ is at most $2n$. Therefore, by Gershgorin bound for the operator norm, we have
\begin{align*}
\|\Sigma_{SC_n}\|\le \max_{i,j}\sum_{k,l}|\sigma_{ij,kl}|\le 2n.
\end{align*}
This completes the proof.
\end{proof}

\section{Proof of Theorems \ref{thm:limvarcirculant}, \ref{thm:reverseciculant} and \ref{thm:scvariance}}\label{sec:limvar}
In this section we give the proofs of Theorem \ref{thm:limvarcirculant}, Theorem \ref{thm:reverseciculant} and Theorem \ref{thm:scvariance}.
\subsection{Proof of theorem \ref{thm:limvarcirculant}}
We  introduce some notations before proving the theorem.
\begin{align*}
A_p&=\{(i_1,\ldots,i_p)\suchthat i_1+\cdots+i_p=0\;{(\mbox{mod $n$})},\; 0\le i_1,\ldots, i_p\le n-1\},\\
A_{p,s}&=\{(i_1,\ldots,i_p)\suchthat i_1+\cdots+i_p=sn,\; 0\le i_1,\ldots, i_p\le n-1\},\\
A_{p,s}'&=\{(i_1,\ldots,i_p)\suchthat i_1+\cdots+i_p=sn,\; 0\le i_1\neq i_2\neq \cdots\neq i_p\le n-1\}.
\end{align*}
The following lemma gives the cardinality of  $A_{p,s}$, which will be used in the proof of Theorem \ref{thm:limvarcirculant}.
\begin{lemma}\label{lem:cardinality}
Suppose $|A_{p,s}|$ denotes   the cardinality of $A_{p,s}$. Then 
$$
|A_{p,s}|=\sum_{k=0}^{s}(-1)^k\binom{p}{k}\binom{(s-k)n+p-1}{p-1}, \;\;\mbox{ for $s=0,1,2,\ldots,p-1$.}
$$ 
\end{lemma}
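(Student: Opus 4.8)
The plan is to identify $|A_{p,s}|$ with a single coefficient of an explicit polynomial and then expand. First I would observe that a tuple $(i_1,\ldots,i_p)\in A_{p,s}$ is exactly an ordered decomposition of $sn$ into $p$ parts, each lying in $\{0,1,\ldots,n-1\}$; consequently
\[
|A_{p,s}| \;=\; [x^{sn}]\,\bigl(1+x+\cdots+x^{n-1}\bigr)^{p},
\]
where $[x^m]$ denotes the coefficient of $x^m$. Writing $1+x+\cdots+x^{n-1}=(1-x^{n})/(1-x)$, this becomes $[x^{sn}]\,(1-x^{n})^{p}(1-x)^{-p}$.

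Next I would expand the two factors separately: the binomial theorem gives $(1-x^{n})^{p}=\sum_{k=0}^{p}(-1)^{k}\binom{p}{k}x^{kn}$, and the negative binomial series gives $(1-x)^{-p}=\sum_{m\ge 0}\binom{m+p-1}{p-1}x^{m}$. Multiplying these and collecting the coefficient of $x^{sn}$ forces $kn+m=sn$, i.e.\ $m=(s-k)n$, which is admissible precisely when $0\le k\le s$ (and then automatically $k\le p$, since $s\le p-1$). This yields
\[
|A_{p,s}| \;=\; \sum_{k=0}^{s}(-1)^{k}\binom{p}{k}\binom{(s-k)n+p-1}{p-1},
\]
which is the claimed identity. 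The restriction $s\in\{0,1,\ldots,p-1\}$ in the statement is just the remark that $0\le i_1+\cdots+i_p\le p(n-1)<pn$, so $A_{p,s}$ is empty outside this range.

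The computation is entirely routine; the only point that warrants a word of care is the truncation of the sum at $k=s$. One should note that for $k>s$ the required exponent $m=(s-k)n$ is negative, so the corresponding term contributes nothing, while for $k=s$ one gets $\binom{p-1}{p-1}=1$. An equivalent route, which I would mention as an alternative, is a direct inclusion--exclusion on the ``bad'' events $\{i_j\ge n\}$ applied to the unconstrained count $\binom{sn+p-1}{p-1}$, substituting $i_j\mapsto i_j-n$ on each bad coordinate; this produces the same sum. Either way there is no genuine obstacle here: the lemma is a bookkeeping statement and goes through in a few lines.
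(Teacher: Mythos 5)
Your proof is correct and follows essentially the same route as the paper: identify $|A_{p,s}|$ with the coefficient of $x^{sn}$ in $(1+x+\cdots+x^{n-1})^p=(1-x^n)^p(1-x)^{-p}$ and expand both factors. The extra remarks on the truncation at $k=s$ and the inclusion--exclusion alternative are fine but not needed.
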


\noindent Assuming this lemma we proceed to prove  Theorem \ref{thm:limvarcirculant}.

\begin{proof}[Proof of theorem \ref{thm:limvarcirculant}] Recall that  we have $\Tr(C_n^p)=n\sum_{A_p}X_{i_1}\cdots X_{i_p}$, for fixed positive integer $p$.
Therefore we have 
\begin{align*}
\frac{1}{n^{p+1}}\var[\Tr(C_n^p)]=\frac{1}{n^{p-1}}\l[\E\l(\sum_{A_p}X_{i_1}\cdots X_{i_p}\r)^2-\l(\E\sum_{A_p}X_{i_1}\cdots X_{i_p}\r)^2\r]
\end{align*}
Note that if $p$ is odd then $\E\l(\sum_{A_p}X_{i_1}\cdots X_{i_p}\r)=0$. Again when $p$ is even then $\E\l(\sum_{A_p}X_{i_1}\cdots X_{i_p}\r)=O(n^{p/2-1})$. Therefore for any positive integer $p$, we have 
\begin{align*}
\lim_{n\to \infty}\frac{1}{n^{p/2-1/2}} \E\l(\sum_{A_p}X_{i_1}\cdots X_{i_p}\r)=0
\end{align*}
and, hence 
\begin{align*}
\lim_{n\to \infty}\frac{1}{n^{p+1}}\var[\Tr(C_n^p)]&=\lim_{n\to \infty}\frac{1}{n^{p-1}}\l[\E\l(\sum_{A_p}X_{i_1}\cdots X_{i_p}\r)^2\r]\\
&=\lim_{n\to \infty}\frac{1}{n^{p-1}}\sum_{A_p}\sum_{A_p}\E[X_{i_1}\cdots X_{i_p}X_{j_1}\cdots X_{j_p}]\\
&=\lim_{n\to \infty}\frac{1}{n^{p-1}}\sum_{s,t=0}^{p-1}\sum_{A_{p,s}}\sum_{A_{p,t}}\E[X_{i_1}\cdots X_{i_p}X_{j_1}\cdots X_{j_p}].
\end{align*}
Note that the number of solutions of $i_1+\cdots+i_p=sn$, with $0\le i_1,\ldots,i_p\le n-1$, is $O(n^{p-1})$. And if atleast two indices are equal then the number of solutions of the same equation is $O(n^{p-2})$. 

Again $\E[X_{i_1}\cdots X_{i_p}X_{j_1}\cdots X_{j_p}]$ will be non zero only when the random variables appear with even order. Observe that if $s\neq t$ then there is atleast one self matching in $\{i_1,\ldots,i_p\}$ and there exists a self matching with in $\{j_1,\ldots,j_p\}$, therefore in such cases the number of non zero terms is $O(n^{p-2})$. Therefore we have 
\begin{align}\label{eqn:1}
\lim_{n\to \infty}\frac{1}{n^{p+1}}\var[\Tr(C_n^p)]&=\lim_{n\to \infty}\frac{p!}{n^{p-1}}\sum_{s=0}^{p-1}\sum_{A_{p,s}'}\E[X_{i_1}^2\cdots X_{i_p}^2]=p!\sum_{s=0}^{p-1}\lim_{n\to \infty}\frac{|A_{p,s}'|}{n^{p-1}}.
\end{align}
The last equality follows from the fact that $\E[X_{i_1}^2\cdots X_{i_p}^2]=\E[X_{i_1}^2]\cdots \E[X_{i_p}^2]=1$, as $i_1,\ldots, i_p$ are distinct number and $X_0,X_1,\ldots,X_{n-1}$ are independent with mean zero and variance one. The factor $p!$ appeared because $\{j_1,j_2,\ldots, j_p\}$ can match with given vector $(i_1,i_2,\ldots,i_p)$ in $p!$ ways. Note that the number of cases  for which atleast one equality holds in the indices of $(i_1,\ldots,i_p)\in A_{p,s}$  is $O(n^{p-2})$. Therefore we have 
\begin{align}\label{eqn:2}
\lim_{n\to \infty}\frac{|A_{p,s}'|}{n^{p-1}}=\lim_{n\to \infty}\frac{|A_{p,s}|}{n^{p-1}}\;\;\mbox{ for $s=0,1,2,\ldots, p-1$.}
\end{align}
Since $p$ is a fixed positive integer, by Lemma \ref{lem:cardinality},  for $s=0,1,2,\ldots, p-1$, we get 
\begin{align}\label{eqn:3}
\lim_{n\to \infty}\frac{|A_{p,s}|}{n^{p-1}}=\frac{1}{(p-1)!}\sum_{k=0}^{s}(-1)^{k}\binom{p}{k}(s-k)^{p-1}=f_p(s),
\end{align}
where $f_p$ is the probability density function of Irwin-Hall distribution (as in \eqref{eqn:density}).  Therefore by \eqref{eqn:1}, \eqref{eqn:2} and \eqref{eqn:3} we have
\begin{align*}
\lim_{n\to \infty}\frac{1}{n^{p+1}}\var[\Tr(C_n^p)]&=p!\sum_{s=0}^{p-1}f_p(s).
\end{align*}
Hence the result.
\end{proof}

\noindent Now we   prove Lemma \ref{lem:cardinality}.

\begin{proof}[Proof of Lemma \ref{lem:cardinality}]
Observe that, for fixed positive integer $p$ and $0\leq s<p$,  $|A_{p,s}|$ is the coefficient of $x^{sn}$ in the expression $(1+x+x^2+\cdots+x^{n-1})^p$. We have
\begin{align*}
(1+x+x^2+\cdots+x^{n-1})^p&= (1-x^n)^p(1-x)^{-p}
\\&=\l(\sum_{k=0}^p(-1)^k\binom{p}{k}x^{kn}\r)\l(\sum_{m=0}^{\infty}\binom{p+m-1}{p-1}x^m\r).
\end{align*}
It is clear from the last equation that the coefficient of $x^{sn}$ is 
\begin{align*}
\sum_{k=0}^{s}(-1)^k\binom{p}{k}\binom{(s-k)n+p-1}{p-1},
\end{align*}
for $s=0,1,2,\ldots, p-1$. This completes the proof.
\end{proof}

\vspace{.2cm}
\subsection{Proof of Theorem \ref{thm:reverseciculant}} We introduce some notations which will be used in the proof of  Theorem \ref{thm:reverseciculant}.
\begin{align*}
B_{2p}&=\{(i_1,\ldots,i_{2p})\in \mathbb N^{2p}\suchthat \sum_{k=1}^{2p}(-1)^ki_k=0 \mbox{ (mod $n$) }, 1\le i_1,\ldots,i_{2p}\le n\},\\
B_{2p}'&=\{(i_1,\ldots,i_{2p})\in \mathbb N^{2p}\suchthat \sum_{k=1}^{2p}(-1)^ki_k=0 \mbox{ (mod $n$) }, 1\le i_1\neq i_2\neq \cdots\neq i_{2p}\le n\},\\
B_{2p,s}&=\{(i_1,\ldots,i_{2p})\in \mathbb N^{2p}\suchthat \sum_{k=1}^{2p}(-1)^ki_k=sn,1\le i_1,\ldots,i_{2p}\le n\},\\
B_{2p,s}'&=\{(i_1,\ldots,i_{2p})\in \mathbb N^{2p}\suchthat \sum_{k=1}^{2p}(-1)^ki_k=sn,1\le i_1\neq i_2\neq \cdots\neq i_{2p}\le n\}.
\end{align*}
The following lemma gives the cardinality of $B_{2p,s}$.
\begin{lemma}\label{lem:cardinalityb}
Suppose $|B_{2p,s}|$ denotes   the cardinality of $B_{2p,s}$. Then 
$$
|B_{2p,s}|=\sum_{k=0}^{p+s-1}(-1)^k\binom{2p}{k}\binom{(p+s-k)n+p-1}{2p-1}, \;\;\mbox{ for $s=-(p-1),\ldots, 0,\ldots, p-1$.}
$$ 
\end{lemma}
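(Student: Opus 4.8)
The plan is to compute $|B_{2p,s}|$ by the same generating function bookkeeping used for Lemma \ref{lem:cardinality}. First I would observe that for $(i_1,\ldots,i_{2p})$ with $1\le i_k\le n$ the alternating sum $\sum_{k=1}^{2p}(-1)^k i_k$ ranges over $[-(p-1)n - (p-1)\cdot 0,\ \ldots]$ more precisely, writing $i_k = j_k+1$ with $0\le j_k\le n-1$, the $+1$'s cancel in pairs (there are $p$ even-indexed and $p$ odd-indexed terms), so $\sum_{k=1}^{2p}(-1)^k i_k = \sum_{k=1}^{2p}(-1)^k j_k$. Hence counting solutions of $\sum (-1)^k i_k = sn$ with $1\le i_k\le n$ is the same as counting $\sum (-1)^k j_k = sn$ with $0\le j_k\le n-1$; for $p$ even- and $p$ odd-position variables this is the coefficient of $x^{0}$ in $\bigl(\sum_{j=0}^{n-1}x^{j}\bigr)^{p}\bigl(\sum_{j=0}^{n-1}x^{-j}\bigr)^{p}$ after the shift by $sn$, i.e.\ the coefficient of $x^{pn + sn}$ in $(1+x+\cdots+x^{n-1})^{2p}$.

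Second, I would expand exactly as in the proof of Lemma \ref{lem:cardinality}:
\begin{align*}
(1+x+\cdots+x^{n-1})^{2p} &= (1-x^n)^{2p}(1-x)^{-2p}\\
&= \left(\sum_{k\ge 0}(-1)^k\binom{2p}{k}x^{kn}\right)\left(\sum_{m\ge 0}\binom{2p+m-1}{2p-1}x^m\right),
\end{align*}
and read off the coefficient of $x^{(p+s)n}$. Matching $kn + m = (p+s)n$ forces $m=(p+s-k)n$, and $m\ge 0$ forces $0\le k\le p+s$; the upper end $k=p+s$ contributes $\binom{2p+0\cdot n -1}{2p-1}=\binom{p+s-k)n+p-1}{2p-1}$ evaluated at $m=0$, consistent with the claimed formula $\sum_{k=0}^{p+s-1}(-1)^k\binom{2p}{k}\binom{(p+s-k)n+p-1}{2p-1}$ once one notices the $k=p+s$ term $(-1)^{p+s}\binom{2p}{p+s}\binom{p-1}{2p-1}$ vanishes because $\binom{p-1}{2p-1}=0$ for $p\ge 1$. (One should also double-check the top limit is really $p+s-1$ rather than $p+s$: since $\binom{p-1}{2p-1}=0$, both forms agree, so I would keep the form in the statement.)

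The only genuine subtlety — and the step I expect to require the most care — is the reduction in the first paragraph: checking that the alternating exponent $pn$ is the correct shift and that the range $-(p-1)\le s\le p-1$ is exactly the set of $s$ for which $|B_{2p,s}|$ is the relevant (nonzero, $O(n^{2p-1})$) quantity. Concretely, $\sum(-1)^k j_k$ with $0\le j_k\le n-1$ lies in $[-(p)(n-1),\ p(n-1)]$, so $sn$ with $|s|\le p$ is feasible, but the extreme values $s=\pm p$ are attained only on an $O(n^{2p-2})$-size set of boundary configurations and do not affect the leading order; that is why in the application (the proof of Theorem \ref{thm:reverseciculant}) only $s\in\{-(p-1),\ldots,p-1\}$ enters. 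After that, everything is the routine binomial-coefficient extraction above, so I would present the generating-function computation in full and relegate the index-range remark to a single sentence.
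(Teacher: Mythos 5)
Your overall strategy --- express $|B_{2p,s}|$ as a coefficient of a Laurent polynomial, reduce to $(1+x+\cdots+x^{n-1})^{2p}=(1-x^n)^{2p}(1-x)^{-2p}$, and extract the coefficient --- is exactly the paper's, but your exponent bookkeeping has an off-by-$x^{p}$ error that changes the answer. Since $\sum_{j=0}^{n-1}x^{-j}=x^{-(n-1)}(1+x+\cdots+x^{n-1})$, the coefficient of $x^{sn}$ in $\bigl(\sum_{j=0}^{n-1}x^{j}\bigr)^{p}\bigl(\sum_{j=0}^{n-1}x^{-j}\bigr)^{p}$ is the coefficient of $x^{sn+(n-1)p}=x^{(p+s)n-p}$ in $(1+x+\cdots+x^{n-1})^{2p}$, not of $x^{(p+s)n}$ as you claim. (The paper reaches the same exponent working directly with $1\le i_k\le n$: its generating function is $x^{p-np}(1+x+\cdots+x^{n-1})^{2p}$.) With your exponent the extraction yields $\sum_{k=0}^{p+s}(-1)^k\binom{2p}{k}\binom{(p+s-k)n+2p-1}{2p-1}$, a genuinely different quantity: for $p=2$, $s=1$ it equals $\binom{n-1}{3}$ (by the symmetry of the coefficients of $(1+\cdots+x^{n-1})^4$), whereas $|B_{4,1}|=\binom{n+1}{3}$. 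Your own consistency check at $k=p+s$ already exposes the slip --- you equate $\binom{2p-1}{2p-1}=1$ with $\binom{p-1}{2p-1}=0$; these cannot both be the $k=p+s$ term, and the "vanishing binomial" reconciliation does not hold for the coefficient you actually computed.

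With the corrected exponent everything falls into place and no vanishing-term argument is needed: matching $kn+m=(p+s)n-p$ gives $m=(p+s-k)n-p$, the second factor contributes $\binom{2p+m-1}{2p-1}=\binom{(p+s-k)n+p-1}{2p-1}$, and $m\ge 0$ forces $(p+s-k)n\ge p>0$, hence $k\le p+s-1$, which is precisely the stated upper limit. Your first-paragraph reduction ($i_k=j_k+1$ with the $\pm 1$'s cancelling) and your remark about the relevant range of $s$ are both fine.
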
 
\noindent Using this lemma we first prove Theorem \ref{thm:reverseciculant} and then we prove the lemma.

\begin{proof}[Proof of Theorem \ref{thm:reverseciculant}]
Let $e_1,\ldots,e_n$ be the standard unit vectors in $\mathbb R^n$, i.e., $e_i=(0,\ldots,1,\ldots, 0)^t$ ($1$ in $i$-th place).  Therefore we have 
\begin{align*}
(RC_n)e_i=\mbox{$i$-th column}=\sum_{i_1=1}^nX_{i_1}e_{i_1-i+1 \mbox{ mod $n$}},
\end{align*}
for $i=1,\ldots, n$ (we write $e_0=e_n$). Repeating the procedure we get 
\begin{align*}
(RC_n)^2e_i=\sum_{i_1,i_2=1}^nX_{i_1}X_{i_2}e_{i_2-i_1+i \mbox{ mod $n$}},
\end{align*}
for $i=1,\ldots, n$. Therefore in general we get
\begin{align*}
(RC_n)^{2p}e_i&=\sum_{i_1,\ldots,i_{2p}=1}^nX_{i_1}\ldots X_{i_{2p}}e_{i_{2p}-i_{2p-1}\cdots -i_1+i \mbox{ mod $n$}},
\\(RC_n)^{2p+1}e_i&=\sum_{i_1,\ldots,i_{2p+1}=1}^nX_{i_1}\ldots X_{i_{2p+1}}e_{i_{2p+1}-i_{2p}\cdots +i_1-i+1 \mbox{ mod $n$}},
\end{align*}
for $i=1,\ldots, n$.  Therefore the trace of $RC_n^{2p}$ can be written as 
\begin{align*}
\Tr(RC_n^{2p})=\sum_{i=1}^ne_i^t(RC_n)e_i=n\sum_{B_{2p}}X_{i_1}\ldots X_{i_{2p}}.
\end{align*}
Which implies  that 
\begin{align}\label{eqn:revari}
&\lim_{n\to\infty}\frac{\var[\Tr(RC_n^{2p})]}{n^{2p+1}}\nonumber
\\=&\lim_{n\to\infty}\frac{1}{n^{2p-1}}\sum_{B_{2p}}\sum_{B_{2p}}\l(\E[X_{i_1}\ldots X_{i_{2p}}X_{j_1}\ldots X_{j_{2p}}]-\E[X_{i_1}\ldots X_{i_{2p}}]\E[X_{j_1}\ldots X_{j_{2p}}]\r).
\end{align}
Observe that, if $\{i_1,i_2,\ldots,i_{2p}\}\cap \{j_1,j_2,\ldots,j_{2p}\}=\emptyset$ then from independence of $X_i$'s
$$\E[X_{i_1}\ldots X_{i_{2p}}X_{j_1}\ldots X_{j_{2p}}]-\E[X_{i_1}\ldots X_{i_{2p}}]\E[X_{j_1}\ldots X_{j_{2p}}]=0.$$
Therefore we can get the non-zero contribution in \eqref{eqn:revari} only when there is atleast one cross matching among $\{i_1,\ldots,i_{2p}\}$ and $\{j_1,\ldots,j_{2p}\}$, i.e., $\{i_1,i_2,\ldots,i_{2p}\}\cap \{j_1,j_2,\ldots,j_{2p}\}\neq\emptyset$. 

\noindent {\bf Case I (odd number of cross matching):}  Suppose  $(i_1,i_2,\ldots,i_{2p}),(j_1,j_2,\ldots,j_{2p})\in B_{2p}$ and $|\{i_1,i_2,\ldots,i_{2p}\}\cap \{j_1,j_2,\ldots,j_{2p}\}|=2k-1$ for some $k=1,2,\ldots, p$,  where $|\{\cdot\}|$ denotes cardinality of the set  $\{\cdot\}$. We show  that such $(i_1,i_2,\ldots,i_{2p}),(j_1,j_2,\ldots,j_{2p})\in B_{2p}$  will have zero contribution in \eqref{eqn:revari}. 

First observe for $k=1$, either $ \E[X_{i_1}\ldots X_{i_{2p}}]=0$ or $\E[X_{j_1}\ldots X_{j_{2p}}]$ as $\E(X_i)=0$, and $\E[X_{i_1}\ldots X_{i_{2p}}X_{j_1}\ldots X_{j_{2p}}]=0$ as there will be at least one random variable with odd power. Hence for $k=1$,
$$\E[X_{i_1}\ldots X_{i_{2p}}X_{j_1}\ldots X_{j_{2p}}]-\E[X_{i_1}\ldots X_{i_{2p}}]\E[X_{j_1}\ldots X_{j_{2p}}]=0.$$

For $k\geq 2$, let $(\ell_1,\ldots,\ell_{2k-1},i_{2k}, \ldots,i_{2p})$ and $(\ell_1,\ldots,\ell_{2k-1},j_{2k}, \ldots,j_{2p})$ be two typical elements in $B_{2p}$ satisfying the condition of case I. For a  non zero contribution from $\E[X_{i_1}\ldots X_{i_{2p}}X_{j_1}\ldots X_{j_{2p}}]$, there must exist $\ell^*$ and $\ell_*$ among $\{\ell_1,\ldots,\ell_{2k-1}\}$  such that $\ell^*$ and $\ell_*$ matches with one of $\{i_{2k}, \ldots,i_{2p}\}$ and  $\{j_{2k}, \ldots,j_{2p}\}$ receptively, and  the rest of the $(2p-2k)$ variables of $\{i_{2k}, \ldots,i_{2p}\}$ are at least  pair matched and similarly, rest of the $(2p-2k)$ variables of $\{j_{2k}, \ldots,j_{2p}\}$ are also at least pair matched. Hence the number of free variables in $\{i_{2k}, \ldots,i_{2p}\}$ is at most $(p-k)$ and similarly the number of free variables in $\{j_{2k}, \ldots,j_{2p}\}$ is at most $(p-k)$.
After choosing the free variables in $\{i_{2k}, \ldots,i_{2p}\}$, $\ell^*$ will be determined by the rest of the $2k-2$ variables of $\{\ell_1,\ell_2,\ldots,\ell_{2k-1}\}\backslash \{\ell^*\}$ since $(\ell_1,\ldots,\ell_{2k-1},i_{2k},\ldots,i_{2p})\in B_{2p}$. Hence the number of free variables in $\{i_1,i_2,\ldots,i_{2p}\}\cup \{j_1,j_2,\ldots,j_{2p}\}$ satisfying the above condition is at most $(p-k)+(p-k)+(2k-2)=2p-2$. Therefore, for $k\geq 2$
$$
\sum_{B_{2p}}\sum_{B_{2p}}\l(\E[X_{i_1}\ldots X_{i_{2p}}X_{j_1}\ldots X_{j_{2p}}]-\E[X_{i_1}\ldots X_{i_{2p}}]\E[X_{j_1}\ldots X_{j_{2p}}]\r)=O(n^{2p-2}).
$$
Hence the contribution  in \eqref{eqn:revari} is zero if the number of cross matching among $\{i_1,i_2,\ldots,i_{2p}\}$, $\{j_1,j_2,\ldots,j_{2p}\}$ is odd. 

\noindent {\bf Case II (even number of cross matching):} Now suppose  $(i_1,i_2,\ldots,i_{2p})\in B_{2p}$, $(j_1,j_2,\ldots,j_{2p})\in B_{2p}$ and $|\{i_1,i_2,\ldots,i_{2p}\}\cap \{j_1,j_2,\ldots,j_{2p}\}|=2k$ for some $k=1,2,\ldots, p$. We define, for $k=1,2,\ldots,p$,
$$
I_{2k}:=\{((i_1,\ldots,i_{2p}),(j_1,\ldots,j_{2p}))\in B_{2p}\times B_{2p}\suchthat |\{i_1,\ldots,i_{2p}\}\cap \{j_1,\ldots,j_{2p}\}|=2k\}.
$$
Now from the discussion of Case I, we have 
\begin{align}\label{1}
&\lim_{n\to\infty}\frac{\var[\Tr(RC_n^{2p})]}{n^{2p+1}}\nonumber
\\=&\lim_{n\to\infty}\frac{1}{n^{2p-1}}\sum_{k=1}^p\sum_{I_{2k}}\l(\E[X_{i_1}\ldots X_{i_{2p}}X_{j_1}\ldots X_{j_{2p}}]-\E[X_{i_1}\ldots X_{i_{2p}}]\E[X_{j_1}\ldots X_{j_{2p}}]\r).
\end{align}
To understand the contribution from $I_{2k}$, let us consider a typical element of $I_{2k}$, say, $((\ell_1,\ldots,\ell_{2k},i_{2k+1}, \ldots,i_{2p})$,$(\ell_1,\ldots,\ell_{2k},j_{2k+1}, \ldots,j_{2p}))$. 
For such  an element of $I_{2k}$, the number of free variable will be maximum  if  $\{\ell_1,\ldots,\ell_{2k}\}, \{i_{2k+1}, \ldots,i_{2p}\}$, $\{j_{2k+1}, \ldots,j_{2p}\}$ are disjoint sets and $\ell_1,\ldots,\ell_{2k}$ are distinct and the indices of $(i_{2k+1}, \ldots,i_{2p})$ and $(j_{2k+1}, \ldots,j_{2p})$ are odd-even pair matched. 

We say a pair is {\it odd-even pair matched} if    one of the elements of the pair appears at odd position and other one appears at even position. For example, $(1,1,2,2)$ is odd-even pair matched whereas  $(1,2,1,2)$ is only pair matched, not odd-even pair matched.

 Therefore, if  $(i_{2k+1}, \ldots,i_{2p})$ and $(j_{2k+1}, \ldots,j_{2p})$ are odd-even pair matched, then we have
\begin{equation}\label{reduced condition}
\sum_{i=1}^{2k}(-1)^i\ell_i=0\; \;\mbox{mod $n$}.
\end{equation}
Now for $k=2,3,\ldots, p$, the number of vectors $(\ell_1,\ldots,\ell_{2k})$ with distinct elements that satisfy  equation \eqref{reduced condition}  is $O(n^{2k-1})$. The number of free variables in $\{i_{2k+1}, \ldots,i_{2p}\}$ is $(p-k)$ as they are odd-even pair matched and hence we have $O(n^{p-k})$ choices for $\{i_{2k+1}, \ldots,i_{2p}\}$. Similarly, we have $O(n^{p-k})$ choices for \\$\{j_{2k+1}, \ldots,j_{2p}\}$. Hence, for $k=2,3,\ldots, p$, the maximum number of choice for \\$((\ell_1,\ldots,\ell_{2k},i_{2k+1}, \ldots,i_{2p})$,$(\ell_1,\ldots,\ell_{2k},j_{2k+1}, \ldots,j_{2p}))$ is $O(n^{(2k-1)+(p-k)+(p-k)})=O(n^{2p-1})$. In any other situation, like, $\ell_1,\ldots,\ell_{2k}$ are not distinct or one of $(i_{2k+1}, \ldots,i_{2p})$ and $(j_{2k+1}, \ldots,j_{2p})$ are not odd-even pair matched, the number of choices will be $O(n^{2p-2})$. Hence, maximum contribution $(O(n^{2p-1}))$ will come when $\{\ell_1,\ldots,\ell_{2k}\},$ $\{i_{2k+1}, \ldots,i_{2p}\}$, $\{j_{2k+1}, \ldots,j_{2p}\}$ are disjoint sets and $\ell_1,\ldots,\ell_{2k}$ are distinct and the indices of $(i_{2k+1}, \ldots,i_{2p})$ and $(j_{2k+1}, \ldots,j_{2p})$ are odd-even pair matched. As $\{\ell_1,\ell_2,\ldots,\ell_{2k}\}$ are distinct, we have 
$$
\E[X_{\ell_1}\ldots X_{\ell_{2k}}X_{i_{2k+1}}\ldots X_{i_{2p}}]\E[X_{\ell_{1}}\ldots X_{\ell_{2k}}X_{j_{2k+1}}\ldots X_{j_{2p}}]=0.
$$
Therefore, from the above discussion for $k=2,\ldots,p$, we have 
\begin{align}\label{2}
&\lim_{n\to \infty}\frac{1}{n^{2p-1}}\sum_{I_{2k}}\E[X_{i_1}\ldots X_{i_{2p}}X_{j_1}\ldots X_{j_{2p}}]\nonumber\\
&=\lim_{n\to \infty}\frac{1}{n^{2p-1}}\sum_{B_{2k}'}\sum_{B_{2k}'} c_k n^{2(p-k)}\E[X_{i_1}\ldots X_{i_{2k}}X_{j_1}\ldots X_{j_{2k}}]\nonumber \\
&=c_k\lim_{n\to \infty}\frac{1}{n^{2k-1}}\sum_{B_{2k}'}\sum_{B_{2k}'}\E[X_{i_1}\ldots X_{i_{2k}}X_{j_1}\ldots X_{j_{2k}}],
\end{align}
where $c_k=\l(\binom{p}{p-k}^2(p-k)!\r)^2.$ For the odd-even pair matching among $(2p-2k)$ variables from $(i_1,\ldots, i_{2p})$, first we choose $(p-k)$ odd and $(p-k)$ even position from the available $p$ odd and $p$ even position in $\binom{p}{p-k}^2$ ways. After choosing $(p-k)$ odd, $(p-k)$ even positions and $(p-k)$ free variables in odd positions, the random variables in even positions can permute among themselves (satisfying the condition of odd-even pair matching) in  $(p-k)!$  ways. Hence, odd-even pair matching among $(2p-2k)$ variables of $(i_1,\ldots, i_{2p})$  happens in  $\binom{p}{p-k}^2(p-k)!n^{p-k}$ ways. Similarly,  odd-even pair matching happens among $(2p-2k)$ variables from $(j_1,j_2,\ldots, j_{2p})$ in $\binom{p}{p-k}^2(p-k)!n^{p-k}$ ways. 
The rest of the variable $\{i_1,i_2,\ldots,i_{2k}\}$ and $\{j_1,j_2,\ldots,j_{2k}\}$ will (cross) match completely and both belong to $B_{2k}'$. Now from \eqref{2}, we get 
\begin{align}\label{reduced from 2}
&\lim_{n\to \infty}\frac{1}{n^{2p-1}}\sum_{I_{2k}}\E[X_{i_1}\ldots X_{i_{2p}}X_{j_1}\ldots X_{j_{2p}}] \nonumber\\
&=c_k\lim_{n\to \infty}\frac{1}{n^{2k-1}}\sum_{B_{2k}'}\sum_{B_{2k}'}\E[X_{i_1}\ldots X_{i_{2k}}X_{j_1}\ldots X_{j_{2k}}]\nonumber \\
&=c_k\lim_{n\to \infty}\frac{1}{n^{2k-1}}\sum_{s,t=-(k-1)}^{k-1}\sum_{B_{2k,s}'}\sum_{B_{2k,t}'}\E[X_{i_1}\ldots X_{i_{2k}}X_{j_1}\ldots X_{j_{2k}}]\nonumber \\
&=c_k \sum_{s=-(k-1)}^{k-1}\lim_{n\to \infty}\frac{|B_{2k,s}'| (2-{\bf 1}_{\{s=0\}})k!k!}{n^{2k-1}}.
\end{align}
The constant $(2-{\bf 1}_{\{s=0\}})$ appeared because for $s\neq 0$, $B_{2k,s}'$ can (cross) match completely  with $B_{2k,s}'$ and $B_{2k,-s}'$ . 
The factor $k!^2$ appeared because for a fixed choice of $(i_1,\ldots,i_{2k})\in B_{2k,s}'$,  we can choose the same set of values  from $\{j_1,\ldots, j_{2k}\}$ in $k!k!$ different ways, permuting the odd positions and the even positions among themselves. 

Now using Lemma \ref{lem:cardinalityb}, we get
\begin{align}\label{3}
\lim_{n\to \infty}\frac{|B_{2k,s}'|}{n^{2k-1}}=\lim_{n\to \infty}\frac{|B_{2k,s}|}{n^{2k-1}}
&=\frac{1}{(2k-1)!}\sum_{s=-(k-1)}^{k-1}\sum_{j=0}^{k+s-1}(-1)^j\binom{2k}{j}(k+s-j)^{2k-1}.
\end{align}
Note that when  $k=1$ in Case II, then from \eqref{reduced condition} we get  $\ell_1=\ell_2$. Therefore 
\begin{align}\label{4}
&\lim_{n\to \infty}\frac{1}{n^{2p-1}}\sum_{I_{2}}\l(\E[X_{i_1}\ldots X_{i_{2p}}X_{j_1}\ldots X_{j_{2p}}]-\E[X_{i_1}\ldots X_{i_{2p}}]\E[X_{j_1}\ldots X_{j_{2p}}]\r)\nonumber
\\=&c_1(\E X_1^4-(\E X_1^2)^2)=(3-1)c_1=2c_1,
\end{align}
where $c_1=\l(\binom{p}{p-1}^2(p-1)!\r)^2$.
Therefore from \eqref{1},\eqref{reduced from 2},\eqref{3},\eqref{4} we have 
$$
\lim_{n\to\infty}\frac{\var[\Tr(RC_n^{2p})]}{n^{2p+1}}=\sum_{k=2}^pc_kg(k)+2c_1,
$$
 where $c_k=\l(\binom{p}{p-k}^2(p-k)!\r)^2$ and $g(k)$ is given by
 $$
 g(k)=\frac{1}{(2k-1)!}\sum_{s=-(k-1)}^{k-1}\sum_{j=0}^{k+s-1}(-1)^j\binom{2k}{j}(k+s-j)^{2k-1}(2-{\bf 1}_{\{s=0\}})k!k!.
 $$
 Hence the result.
\end{proof}

\begin{proof}[Proof of Lemma \ref{lem:cardinalityb}]
Note that, for fixed positive integers $p$ and $s$,  $|B_{2p,s}|$ is the coefficient of $x^{sn}$ in the expression $(x+x^2+\cdots+x^n)^p(x^{-1}+x^{-2}+\cdots+x^{-n})^p$. We have
\begin{align*}
&(x+x^2+\cdots+x^n)^p(x^{-1}+x^{-2}+\cdots+x^{-n})^p
\\&={x^{p-np}}(1+x+\cdots+x^{n-1})^{2p}
\\&=x^{p-np} (1-x^n)^{2p}(1-x)^{-2p}
\\&=x^{p-np}\l(\sum_{k=0}^{2p}(-1)^k\binom{2p}{k}x^{kn}\r)\l(\sum_{m=0}^{\infty}\binom{2p+m-1}{2p-1}x^m\r).
\end{align*}
It is clear from the last equation that the coefficient of $x^{sn}$ is 
\begin{align*}
\sum_{k=0}^{p+s-1}(-1)^k\binom{2p}{k}\binom{(p+s-k)n+p-1}{2p-1},
\end{align*}
for $s=-(p-1),\ldots, 0,\ldots, p-1$. This completes the proof.
\end{proof}

\vspace{.2cm}
\subsection{Proof of Theorem \ref{thm:scvariance}}
We  use the following notations in the proof of Theorem \ref{thm:scvariance}.
\begin{align*}
B_{p}&=\l\{(j_1,\ldots,j_{p})\suchthat \sum_{i=1}^{p}\epsilon_i j_i=0\; \mbox{(mod n)}, \epsilon_i\in\{+1,-1\}, 1\le j_1,\ldots,j_{p}\le \frac{n}{2}\r\},
\\B_p^{(k)}&=\{(j_1,\ldots,j_p)\in B_p\suchthat j_1+\cdots+j_k - j_{k+1}-\cdots -j_p=0 \;\mbox{ (mod $n$)}\},
\\B_p'^{(k)}&=\{(j_1,\ldots,j_p)\in B_p\suchthat j_1+\cdots+j_k - j_{k+1}-\cdots -j_p=0 \;\mbox{ (mod $n$)}, j_1\neq \cdots \neq j_k \},
\\B_{p,s}^{(k)}&=\{(j_1,\ldots,j_p)\in B_p\suchthat j_1+\cdots+j_k - j_{k+1}-\cdots -j_p=sn\}.
\end{align*}
In  set $B_p$, we collect $(j_1,\ldots,j_{p})$ according to their multiplicity.  The following lemma counts the number of elements of $B_p$.
\begin{lemma}\label{lem:sc}
Suppose $|B_p^{(k)}|$ denotes the number of elements in $B_p^{(k)}$. Then 
\begin{align*}
h_p(k):=
\lim_{n\to \infty}\frac{|B_p^{(k)}|}{n^{p-1}}= \frac{1}{(p-1)!}\sum_{s=-\lceil\frac{p-k}{2}\rceil}^{\lfloor\frac{k}{2}\rfloor}\sum_{q=0}^{2s+p-k}(-1)^q\binom{p}{q}\l(\frac{2s+p-k-q}{2}\r)^{p-1},
\end{align*}
where $\lceil x\rceil$ denotes the smallest integer not less than $x$.
\end{lemma}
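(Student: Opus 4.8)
The proof will be a generating-function count of lattice points, in the same spirit as the proofs of Lemmas~\ref{lem:cardinality} and \ref{lem:cardinalityb}. By definition, $B_p^{(k)}$ is the set of ordered tuples $(j_1,\dots,j_p)$ with $1\le j_i\le n/2$ satisfying $j_1+\cdots+j_k-j_{k+1}-\cdots-j_p\equiv 0\pmod n$. Writing $B_{p,s}^{(k)}$ for the subset on which this alternating sum equals exactly $sn$, we get the disjoint decomposition $|B_p^{(k)}|=\sum_s|B_{p,s}^{(k)}|$. Since $j_1+\cdots+j_k-j_{k+1}-\cdots-j_p$ lies between $k-(p-k)\tfrac n2$ and $k\tfrac n2-(p-k)$, only the integers $s$ with $-\lceil(p-k)/2\rceil\le s\le\lfloor k/2\rfloor$ can make $B_{p,s}^{(k)}$ nonempty, so the sum over $s$ has boundedly many nonzero terms and we may interchange it with $\lim_{n\to\infty}$.

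Next I would compute $|B_{p,s}^{(k)}|$ as the coefficient of $x^{sn}$ in
$$\Bigl(\sum_{j=1}^{n/2}x^{j}\Bigr)^{k}\Bigl(\sum_{j=1}^{n/2}x^{-j}\Bigr)^{p-k},$$
treating $n/2$ as an integer (as in the earlier proofs, replacing it by the nearest integer perturbs each count only by $O(n^{p-2})$, which is irrelevant to the limit). Using $\sum_{j=1}^{n/2}x^{j}=x(1+x+\cdots+x^{n/2-1})$ and $\sum_{j=1}^{n/2}x^{-j}=x^{-n/2}(1+x+\cdots+x^{n/2-1})$, this expression equals $x^{k-\frac n2(p-k)}(1+x+\cdots+x^{n/2-1})^{p}$, so $|B_{p,s}^{(k)}|$ is the coefficient of $x^{\frac n2(2s+p-k)-k}$ in
$$(1-x^{n/2})^{p}(1-x)^{-p}=\Bigl(\sum_{q=0}^{p}(-1)^q\binom pq x^{qn/2}\Bigr)\Bigl(\sum_{t\ge 0}\binom{p+t-1}{p-1}x^{t}\Bigr).$$
Matching exponents forces $t=\frac n2(2s+p-k-q)-k$, which is nonnegative for large $n$ precisely when $q\le 2s+p-k-1$; hence
$$|B_{p,s}^{(k)}|=\sum_{q=0}^{2s+p-k}(-1)^q\binom pq\binom{\frac n2(2s+p-k-q)-k+p-1}{p-1},$$
the $q=2s+p-k$ term contributing zero.

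Finally I would pass to the limit inside the fixed-length $q$-sum: $\binom{\frac n2(2s+p-k-q)-k+p-1}{p-1}\sim\frac1{(p-1)!}\bigl(\tfrac n2\bigr)^{p-1}(2s+p-k-q)^{p-1}$, and dividing by $n^{p-1}=2^{p-1}\bigl(\tfrac n2\bigr)^{p-1}$ absorbs the factor $2^{-(p-1)}$ into the bracket, so
$$\lim_{n\to\infty}\frac{|B_{p,s}^{(k)}|}{n^{p-1}}=\frac1{(p-1)!}\sum_{q=0}^{2s+p-k}(-1)^q\binom pq\Bigl(\tfrac{2s+p-k-q}{2}\Bigr)^{p-1}.$$
Summing this over $-\lceil(p-k)/2\rceil\le s\le\lfloor k/2\rfloor$ gives exactly the asserted formula for $h_p(k)$.

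The main (and only) delicate point is the bookkeeping around the range of $s$: one must check that no contributing value is omitted and that the extreme values, where $B_{p,s}^{(k)}$ may be empty or supported on a lower-dimensional set, do no harm. For this I would observe that the limiting $q$-sum above is automatically supported on $\{s:1\le 2s+p-k\le p-1\}$: when $2s+p-k\le 0$ it is either empty or equals $0^{p-1}=0$, and when $2s+p-k\ge p$ it is a full alternating sum $\sum_{q=0}^{p}(-1)^q\binom pq P(q)$ of a polynomial $P$ of degree $p-1<p$, hence vanishes. Consequently it is legitimate to let $s$ run over the slightly wider but cleaner interval $-\lceil(p-k)/2\rceil\le s\le\lfloor k/2\rfloor$; this is the same kind of boundary estimate already used for $|A_{p,s}|$ and $|B_{2p,s}|$ in Lemmas~\ref{lem:cardinality} and \ref{lem:cardinalityb}.
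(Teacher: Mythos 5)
Your proposal is correct and follows essentially the same route as the paper: decompose $|B_p^{(k)}|$ over the finitely many values of $s$, count $|B_{p,s}^{(k)}|$ as a coefficient of a generating function exactly as in Lemma~\ref{lem:cardinalityb}, and pass to the limit term by term. In fact you supply more detail than the paper does (which simply invokes ``the same steps as in the proof of Lemma~\ref{lem:cardinalityb}''), and your verification that the boundary values of $s$ contribute zero in the limit is a worthwhile check the paper leaves implicit.
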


\begin{proof}[Proof of Theorem \ref{thm:scvariance}]
 Suppose $n$ is  odd positive integers. We write $ n/2$ instead of $\lfloor n/2\rfloor $, as asymptotic is same as $n\to\infty$. Then
\begin{align*}
\Tr(SC_n^p)&=\sum_{k=0}^{n-1}\lambda_k^p
=\sum_{k=0}^{n-1}\left(X_0+2\sum_{j=1}^{n/2}X_j\cos(\omega_k j)\right)^p
\\&=\sum_{\ell=0}^{p}\binom{p}{\ell}X_0^{p-\ell}\sum_{k=0}^{n-1} \left(\sum_{j=1}^{n/2}X_j(e^{i\omega_k j}+e^{-i\omega_k j})\right)^{\ell},
\end{align*}
where $\omega_k=\frac{2\pi k}{n}$. Since  $\sum_{k=0}^{n-1}e^{i\omega_k j}=0$ for $j\in \mathbb Z\backslash \{0\}$, we have 
\begin{align}\label{eqn:sc0}
\Tr(SC_n^p)=n\sum_{\ell=0}^{p}\binom{p}{\ell}X_0^{p-\ell}\sum_{B_{\ell}}X_{j_1}X_{j_2}\ldots X_{j_{\ell}},
\end{align}
where  $B_{\ell}$ for $\ell=1,\ldots,p$ is given by
\begin{align*}
B_{\ell}:=\l\{(j_1,\ldots,j_{\ell})\suchthat \sum_{i=1}^{\ell}\epsilon_i j_i=0\; \mbox{(mod n)}, \epsilon_i\in\{+1,-1\}, 1\le j_1,\ldots,j_{\ell}\le \frac{n}{2}\r\}
\end{align*} 
and $B_0$ is an empty set with the understanding that the contribution from the sum corresponding to $B_0$ is 1. Note, in $B_{\ell}$, $(j_1,\ldots,j_{\ell})$ are collected according to their multiplicity. 

\noindent\textbf{Proof of (i):} Now assume $p=2m+1$, odd. From \eqref{eqn:sc0}, we have 
\begin{align}\label{eqn:sc1}
\lim_{n\to \infty}\frac{\var[\Tr(SC_n^p)]}{n^{p+1}} 
&=\lim_{n\to \infty}\frac{1}{n^{p-1}}\var\l[\sum_{\ell=0}^{p}\binom{p}{\ell}X_0^{p-\ell}\sum_{B_{\ell}}X_{j_1}X_{j_2}\ldots X_{j_{\ell}}\r]\nonumber
\\&=\lim_{n\to \infty}\frac{1}{n^{p-1}}\E\l[\sum_{\ell=0}^{p}\binom{p}{\ell}X_0^{p-\ell}\sum_{B_{\ell}}X_{j_1}X_{j_2}\ldots X_{j_{\ell}}\r]^2 \nonumber \\
&=\lim_{n\to \infty}\frac{1}{n^{p-1}}\E\l[\sum_{\ell,k=0}^{p}\binom{p}{\ell}\binom{p}{k}X_0^{2p-\ell-k}\sum_{B_{\ell},B_k}X_{j_1}\ldots X_{j_{\ell}}X_{i_1}\ldots X_{i_k}\r].
\end{align}
The second last equality follows from the  fact that for $p$  odd 
\begin{align*}
\E\l[\sum_{\ell=0}^{p}\binom{p}{\ell}X_0^{p-\ell}\sum_{B_{\ell}}X_{j_1}X_{j_2}\ldots X_{j_{\ell}}\r]=0,
\end{align*}
as there exists at least one random variable with odd power. Also note that the right hand side of \eqref{eqn:sc1} has non zero contribution only when  $\ell=k= p-1$ and $\ell=k=p$. Therefore we have 
\begin{align}\label{eqn:sc2}
&\lim_{n\to \infty}\frac{\var[\Tr(SC_n^p)]}{n^{p+1}}\nonumber \\=&\lim_{n\to \infty}\frac{1}{n^{p-1}}\l[p^2\E\sum_{B_{p-1},B_{p-1}}X_{j_1}\ldots X_{j_{p-1}}X_{i_1}\ldots X_{i_{p-1}}+\E\sum_{B_{p},B_{p}}X_{j_1}\ldots X_{j_{p}}X_{i_1}\ldots X_{i_{p}}\r].
\end{align}
Now we calculate the contribution from the first term in \eqref{eqn:sc2}.  Note that $p-1$ is even and  the contribution of the first term will be non zero only when $\sum_{k=1}^{p-1} \epsilon_k=0$,  $\{j_1,\ldots,j_{p-1}\}$ and $\{i_1,\ldots,i_{p-1}\}$ are  disjoint and pair matched with different sign, i.e., one variable of a pair will have positive sign and the other one will have negative sign. Note, positive or negative signs are arising  due to the values of $\epsilon_i$'s.  Therefore we have 
\begin{align}\label{eqn:sc2.5}
\lim_{n\to \infty}\frac{p^2}{n^{p-1}}\E\sum_{B_{p-1},B_{p-1}}X_{j_1}\ldots X_{j_{p-1}}X_{i_1}\ldots X_{i_{p-1}}&=p^2\binom{p-1}{(p-1)/2}^2[\l((p-1)/2\r)!]^2\frac{1}{2^{(p-1)}}.
\end{align}
The factor $\binom{p-1}{(p-1)/2}^2$ appeared because in $\binom{p-1}{(p-1)/2}$ many ways $\sum_{k=1}^{p-1} \epsilon_k=0$ in one $B_{p-1}$ and the summation is over two $B_{p-1}$.  $\l(\frac{p-1}{2}\r)!$ appeared because for each free choice of  $(p-1)/2$  variables among  $\{i_1,\ldots,i_{p-1}\}$ with positive sign, we can choose rest of the $(p-1)/2$  variables with negative sign in $\l(\frac{p-1}{2}\r)!$ ways to have pair matching. Using  same argument for  $\{j_1,\ldots,j_{p-1}\}$, we get another $\l(\frac{p-1}{2}\r)!$ factor. $\frac{1}{2^{(p-1)}}$ arises because $1\leq i_k,j_k\leq n/2$. 

Next we calculate the second term of \eqref{eqn:sc2}. There will be non zero  contribution when there is exactly odd number of cross matching between $\{i_1,\ldots,i_{p}\}$ and $\{j_1,\ldots,j_{p}\}$ and rest of the variables are pair matched with opposite sign. 
Therefore we get
\begin{align*}
\lim_{n\to \infty}\frac{1}{n^{p-1}}\E\sum_{B_{p},B_{p}}X_{j_1}\ldots X_{j_{p}}X_{i_1}\ldots X_{i_{p}}=\lim_{n\to \infty}\frac{1}{n^{2m}}\sum_{k=0}^{m}\sum_{I_{2k+1}}X_{j_1}\ldots X_{j_{2m+1}}X_{i_1}\ldots X_{i_{2m+1}},
\end{align*}
where 
$$
I_{2k+1}=\{((j_1,\ldots,j_{p}),(i_1,\ldots,i_{p}))\in B_{p}\times B_{p}\;:\;|\{j_1,\ldots,j_{p}\}\cap \{i_1,\ldots,i_{p}\}|=2k+1 \}.
$$
Let $((\ell_1,\ldots,\ell_{2k+1},j_{2k+2},\ldots,j_{2m+1}),(\ell_1,\ldots,\ell_{2k+1},i_{2k+2},\ldots,i_{2m+1}))$ be a typical  element in $I_{2k+1}.$ As we  have discussed, non zero contribution will come when $\ell_1,\ldots,\ell_{2k+1}$ are distinct, $\sum_{t=2k+2}^{2m+1}\epsilon_t=0$,  $\{j_{2k+2},\ldots,j_{2m+1}\}$ and $\{i_{2k+2},\ldots,i_{2m+1}\}$ are pair matched with opposite sign. Therefore, like in \eqref{2}, we have 
\begin{align}\label{eqn:sc2.6}
&\lim_{n\to \infty}\frac{1}{n^{2m}}\sum_{k=0}^{m}\sum_{I_{2k+1}}X_{j_1}\ldots X_{j_{2m+1}}X_{i_1}\ldots X_{i_{2m+1}}\nonumber
\\&=\lim_{n\to \infty}\frac{1}{n^{2m}}\sum_{k=0}^{m}a_k(n/2)^{2(m-k)}\sum_{B_{2k+1}',B_{2k+1}'}X_{j_1}\ldots X_{j_{2k+1}}X_{i_1}\ldots X_{i_{2k+1}}\nonumber
\\&=\sum_{k=0}^{m}\frac{a_k}{2^{2(m-k)}}\lim_{n\to \infty}\frac{1}{n^{2k}}\sum_{B_{2k+1}',B_{2k+1}'}X_{j_1}\ldots X_{j_{2k+1}}X_{i_1}\ldots X_{i_{2k+1}},
\end{align}
where $a_k=(\binom{2m+1}{2m-2k}\binom{2m-2k}{m-k} (m-k)!)^2$. $a_k$ factor is coming for pair matching of $(2m-2k)$ many variables in $(i_1,i_2,\ldots,i_{2m+1})$ and  $(j_1,j_2,\ldots,j_{2m+1})$ both  with opposite sign. In $(i_1,i_2,\ldots,i_{2m+1})$, we can choose $(2m-2k)$ variables in $\binom{2m+1}{2m-2k}$ many ways. Out of $(2m-2k)$ variables, $(m-k)$ many variables  can be chosen with positive sign in $\binom{2m-2k}{m-k}$ many ways. After free choice of $(m-k)$ variables with positive sign, rest of the $(m-k)$ variables with negative sign can be chosen in $(m-k)!$ ways. Therefore for pair matching of $(2m-2k)$ many variables  in $(i_1,i_2,\ldots,i_{2m+1})$ with opposite sign, we get $(\binom{2m+1}{2m-2k}\binom{2m-2k}{m-k} (m-k)!)$ factor. Similarly from $(j_1,j_2,\ldots,j_{2m+1})$, we get one more $(\binom{2m+1}{2m-2k}\binom{2m-2k}{m-k} (m-k)!)$ factor.  Also note $B_1'$ is an empty set. Therefore the term corresponding to $k=0$ in \eqref{eqn:sc2.6} is zero.   Hence  from \eqref{eqn:sc2.6}, we get
\begin{align*}
&\lim_{n\to \infty}\frac{1}{n^{2m}}\sum_{k=0}^{m}\sum_{I_{2k+1}}X_{j_1}\ldots X_{j_{2m+1}}X_{i_1}\ldots X_{i_{2m+1}}\\
&=\sum_{k=1}^{m}\frac{a_k}{2^{2(m-k)}}\lim_{n\to \infty}\frac{1}{n^{2k}}\sum_{B_{2k+1}',B_{2k+1}'}X_{j_1}\ldots X_{j_{2k+1}}X_{i_1}\ldots X_{i_{2k+1}}
\\&=\sum_{k=1}^{m}\frac{a_k}{2^{2(m-k)}}\lim_{n\to \infty}\frac{1}{n^{2k}}\sum_{\ell=0}^{2k+1}\binom{2k+1}{\ell}^2 \ell!(2k+1-\ell)! |B_{2k+1}'^{(\ell)}|
\\&= \sum_{k=1}^{m}\frac{a_k}{2^{2(m-k)}}\sum_{\ell=0}^{2k+1}\binom{2k+1}{\ell}^2 \ell!(2k+1-\ell)!\lim_{n\to \infty}\frac{|B_{2k+1}^{(\ell)}|}{n^{2k}}.
\end{align*}
The factor $\binom{2k+1}{\ell}^2$ appeared because in $\binom{2k+1}{\ell}$ ways we can choose $\ell$ many $+1$ from $\{\epsilon_1,\ldots,\epsilon_{2k+1}\}$ in one $B_{2k+1}'$. The factor $(\ell!(2k+1-\ell)!)$ appeared because for each choice of $(i_1,\ldots,i_{2k+1})$ we have  $(\ell!(2k+1-\ell)!)$ many choice for $(j_1,\ldots,j_{2k+1})$. 
Now using Lemma \ref{lem:sc}, we get
\begin{align}\label{eqn:sc3}
&\lim_{n\to \infty}\frac{1}{n^{2m}}\sum_{k=0}^{m}\sum_{I_{2k+1}}X_{j_1}\ldots X_{j_{2m+1}}X_{i_1}\ldots X_{i_{2m+1}}\nonumber
\\&=\sum_{k=1}^{m}\frac{a_k}{2^{2(m-k)}}\sum_{\ell=0}^{2k+1}\binom{2k+1}{\ell}^2 \ell!(2k+1-\ell)! \ h_{2k+1}(\ell).
\end{align}
Hence combining \eqref{eqn:sc2}, \eqref{eqn:sc2.5} and \eqref{eqn:sc3}, for $p=2m+1$ we have
\begin{align*}
&\lim_{n\to \infty}\frac{\var[\Tr(SC_n^p)]}{n^{p+1}}\\
&=(2m+1)^2\binom{2m}{m}^2(m!)^2\frac{1}{2^{2m}}+\sum_{k=1}^{m}\frac{a_k}{2^{2(m-k)}}\sum_{\ell=0}^{2k+1}\binom{2k+1}{\ell}^2 \ell!(2k+1-\ell)! \ h_{2k+1}(\ell),
\end{align*}
where $a_k=(\binom{2m+1}{2m-2k}\binom{2m-2k}{m-k} (m-k)!)^2$ and $p=2m+1$. This completes the proof when $p$ is odd.

\noindent\textbf{Proof of (ii):} Assume  $p=2m$, even. The idea of the proof is similar to the odd case. Here we outline the proof and skip the details. First note that for $p$ even 
\begin{align*}
\E\l[\sum_{\ell=0}^{p}\binom{p}{\ell}X_0^{p-\ell}\sum_{B_{\ell}}X_{j_1}X_{j_2}\ldots X_{j_{\ell}}\r]\neq 0.
\end{align*}
So the analysis goes in the same line as in the proof of Theorem \ref{thm:reverseciculant}. There will non zero contribution only when $\ell=k=p$ in  \eqref{eqn:sc1}.  Like in Theorem \ref{thm:reverseciculant} (see \eqref{4}), $(\E(X_1^4)-\E(X_1^2)^2)$ will contribute here and $b_1$ corresponds to that contribution in \eqref{eqn:sc4}. Finally it can be shown that
\begin{align}\label{eqn:sc4}
\lim_{n\to \infty}\frac{\var[\Tr(SC_n^p)]}{n^{p+1}} =\sum_{k=2}^{m}\frac{b_k}{2^{(m-k)}}\l(\sum_{\ell=0,\ell\neq k}^{2k}\binom{2k}{\ell}^2 \ell!(2k-\ell)! \  h_{2k}(\ell)+\binom{2k}{k}^2g(k)\r)+\frac{b_1}{2^{(2m-1)}},
\end{align}
where $b_k=(\binom{2m}{2m-2k}\binom{2m-2k}{m-k}(m-k)!)^2$, $g(k)$ as in the Theorem \ref{thm:reverseciculant} and $h_{2k}(\ell)$ as in Lemma \ref{lem:sc}. This completes the proof when $p$ is even.
\end{proof}

\noindent Now we   prove Lemma \ref{lem:sc}.

\begin{proof}[Proof of Lemma \ref{lem:sc}]
By expanding $B_p^{(k)}$ we get
\begin{align}\label{eqn:lemsc2}
|B_p^{(k)}|=\sum_{s=-\lceil\frac{p-k}{2}\rceil}^{\lfloor\frac{k}{2}\rfloor}|B_{p,s}^{(k)}|,
\end{align}
where $B_{p,s}^{(k)}=\{(j_1,\ldots,j_p)\in B_p\suchthat j_1+\cdots+j_k - j_{k+1}-\cdots -j_p=sn\}$.  Now following  the same steps as in the proof of Lemma \ref{lem:cardinalityb}, we have 
\begin{align}\label{eqn:lemsc3}
|B_{p,s}^{(k)}|=\sum_{q=0}^{2s+p-k}(-1)^q\binom{p}{q}\binom{(2s+p-k-q)\frac{n}{2}+p-k-1}{p-1}.
\end{align}
Hence the result follows from  \eqref{eqn:lemsc2} and \eqref{eqn:lemsc3}.
\end{proof}

\begin{remark}
(i) To avoid confusion  we have stated all our theorems    with Gaussian input sequence.  But in the proofs of  Theorem \ref{thm:limvarcirculant} and part (i) of Theorem \ref{thm:scvariance},  we mainly use  the following facts about the input sequence:
\begin{equation}\label{minimum assumption}
X_n\mbox{'s are independent, } \E(X_n)=0, \ \E(X_n^2)=1 \ \mbox{for all}\ n\geq 0 \ \mbox{and} \ \sup_{n\geq 0}\E(X_n^{2p})<\infty.\end{equation}
Therefore,  Theorem \ref{thm:limvarcirculant} and part (i) of Theorem \ref{thm:scvariance} hold with the input sequence  $\{X_n : n\geq 0\}$ also, where  $\{X_n : n\geq 0\}$ satisfies condition \eqref{minimum assumption}.\\
(ii) Similarly in the proofs of Theorem \ref{thm:reverseciculant} and part (ii) of Theorem \ref{thm:scvariance}, we use  the fact that $X_n$'s are independent, $\E(X_n)=0$, $\E(X_n^2)=1$ for all $n$, $\E(X_n^4)$ is same for all $n$ and $\sup_{n\geq 0}\E(X_n^{4p})<\infty$. Hence Theorem \ref{thm:reverseciculant} and part (ii) of Theorem \ref{thm:scvariance} are true if the input sequence $\{X_n:n\geq 0\}$ satisfies these conditions.\\
(iii) We have calculated the limiting formula of the variance $\Tr(A_n^p)$ for a fixed positive integer $p$, but following the idea of our proof one can calculate the limiting formula of the variance $\Tr(f(A_n))$ where $f$ is a fixed polynomial and $A_n$ is one of the patterned matrix of dimension $n\times n$. 
\end{remark}

\vspace{.2cm}
\subsection{Concluding remarks}
In Theorems \ref{thm:main:cir} and \ref{thm:main}, we proved CLT type results in total variation norm of linear spectral statistics of some patterned matrices with Gaussian input sequence.  
It would be interesting to establish CLT type results in total variation norm  of linear spectral statistics of these patterned matrices for any  i.i.d. input sequence with sufficient moment assumption. In \cite{liu2009limit}, the authors have established the CLT for linear spectral statistics of band Toeplitz and Hankel matrices.  It would be interesting  to consider the CLT problems for  appropriate  band and sparse version of circulant, symmetric circulant and reverse circulant matrices. We are currently working on the above issues.

\section{Appendix}\label{sec:norm}
In this section  we present a proof of Theorem \ref{thm:normcir}. We mainly follow the idea of Meckes in \cite{meckes}.   The following lemma is the key ingredient  in the proof of Theorem \ref{thm:normcir}. The lemma gives an upper bound on the expected spectral norms of the random circulant, symmetric circulant, reverse circulant and Hankel  matrices.
 
\begin{lemma}\label{lem:cirnorm}
Suppose $A_n$ is one of the matrix among the random circulant, symmetric circulant, reverse circulant and Hankel matrices with Gaussian input sequence. Then 
\begin{align*}
\E{\|A_n\|}\le c_1 \sqrt{n\log n},
\end{align*}
where $c_1$ is a positive constant.
\end{lemma}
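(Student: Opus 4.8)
Lemma 19 asserts that for $A_n$ one of the random circulant, symmetric circulant, reverse circulant or Hankel matrices with Gaussian input, $\E\|A_n\| \le c_1\sqrt{n\log n}$.

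The plan is to follow Meckes' approach to the spectral norm of random Toeplitz matrices. First I would note that for the circulant and reverse circulant matrices the eigenvalues are available in closed form as (real and imaginary parts of) linear combinations of the Gaussian inputs with trigonometric coefficients: for $C_n$ one has $\lambda_k = \sum_{j=0}^{n-1} X_j e^{i\omega_k j}$ with $\omega_k = 2\pi k/n$, and for $RC_n$ there is a similarly explicit description. Hence $\|C_n\| = \max_{0\le k\le n-1} |\lambda_k|$ is the maximum of $n$ (not independent, but each) Gaussian-type random variables, each with variance $O(n)$; a standard union bound / Gaussian maximal inequality over these $n$ coordinates gives $\E\max_k|\lambda_k| \le c\sqrt{n\log n}$. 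The same computation handles $RC_n$. For the Hankel matrix one uses the observation already made in the paper that $H_n$ is closely related to $RC_n$ (it is a submatrix-type perturbation), so $\|H_n\|$ is controlled by $\|RC_{2n}\|$ (embed $H_n$ appropriately) up to constants, giving the bound for $H_n$ from the reverse-circulant case.

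The genuinely substantive case is the symmetric circulant matrix $SC_n$, which is a Toeplitz matrix with the extra constraint $x_{n-j}=x_j$ and whose eigenvalues are not as cleanly diagonalized by a single Fourier basis in a way that immediately bounds the operator norm. Here I would run Meckes' argument: write $SC_n = \sum_j X_j M_j^{(n)}$ where $M_j^{(n)}$ are the fixed $0/1$ pattern matrices, so that $SC_n$ is a Gaussian random matrix with values in the space of $n\times n$ symmetric matrices. Then apply a Gaussian concentration / Slepian–Fernique-type comparison together with an $\epsilon$-net argument on the sphere $\mathbb{S}^{n-1}$: bound $\E\|SC_n\| = \E\sup_{u\in\mathbb S^{n-1}} \langle u, SC_n u\rangle$ by controlling the variance proxy $\sup_u \var(\langle u, SC_n u\rangle)$, which is $O(n)$, and the increments of the Gaussian process $u \mapsto \langle u, SC_n u\rangle$. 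The net has $e^{O(n)}$ points, each contributing a Gaussian tail of width $O(\sqrt n)$, and the metric-entropy / chaining estimate converts this into the $\sqrt{n\log n}$ bound; the $\log n$ rather than $\sqrt n$ in the net bound comes from the fact (special to Toeplitz-type patterns, as in Meckes) that the relevant covering numbers in the appropriate metric are polynomial in $n$ rather than exponential, so chaining or a direct union bound over a smarter net yields $\log n$. This metric-entropy estimate for the Toeplitz-structured Gaussian process is the main obstacle and the step where I would invoke \cite{meckes} most heavily, essentially verifying that the symmetric circulant pattern satisfies the same hypotheses Meckes used for Toeplitz matrices.

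Finally I would assemble the pieces: each of the four matrix families is handled by one of the two routines above (explicit Fourier eigenvalues with a Gaussian maximal inequality for $C_n$, $RC_n$, and via embedding $H_n$; the net/concentration argument for $SC_n$), and in every case the bound has the form $\E\|A_n\| \le c_1\sqrt{n\log n}$ with $c_1$ absolute. I expect the circulant and reverse circulant cases to be short, the Hankel reduction to be a one-line comparison, and essentially all the work to be in the symmetric circulant case, where the payoff is just a careful transcription of Meckes' Toeplitz estimate to the symmetric circulant pattern.
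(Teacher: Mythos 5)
Your treatment of the circulant, reverse circulant and Hankel cases is correct and in fact takes a lighter route than the paper. The paper handles every case by extending the discrete family of eigenvalues to a continuous subgaussian process $Y_x=\sum_j X_j e^{2\pi i xj}$ on $[0,1]$, bounding the increment metric by $d(x,y)\le 4n^{3/2}|x-y|$, and applying Dudley's entropy integral to get $\E\sup_x|Y_x|\le C\sqrt{n\log n}$; your direct Gaussian maximal inequality over the $n$ eigenvalues (each of variance $O(n)$, so $\E\max_k|\lambda_k|\le C\sqrt{n}\sqrt{\log n}$ by the standard bound for the maximum of $n$ centered Gaussians, independence not required) gives the same conclusion with less machinery. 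The reductions $\|RC_n\|=\|C_n\|$ and $\|H_n\|\le\|RC_{2n}\|$ are exactly the ones the paper uses.

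The problem is the symmetric circulant case, which you single out as ``the genuinely substantive case'' and then do not actually prove. Your premise there is wrong: $SC_n$ \emph{is} a circulant matrix (with the constraint $x_{n-j}=x_j$), hence normal and diagonalized by the same Fourier basis, with explicit real eigenvalues $\lambda_k=X_0+2\sum_{j=1}^{\lfloor n/2\rfloor}X_j\cos(\omega_k j)$. Each $\lambda_k$ is a centered Gaussian with variance at most $1+4\lfloor n/2\rfloor=O(n)$, so the very same maximal inequality you used for $C_n$ gives $\E\|SC_n\|\le C\sqrt{n\log n}$ in one line --- this is essentially what the paper does, feeding the process $Y_x=\sum_{j\le n/2}X_j\cos(2\pi xj)$ into the same Dudley bound. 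By contrast, the sphere-net/chaining argument you propose instead is both unnecessary and, as written, incomplete: the covering-number estimate that you yourself identify as ``the main obstacle'' is never established, only delegated to \cite{meckes}. So the one case where your proposal actually contains work to be done is left as an unverified citation, when it should have been the easiest case of all. Replace that paragraph with the explicit eigenvalue formula and the union bound and the proof is complete.
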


Next we state a result with out proof which will be used in the proof of Theorem \ref{thm:normcir}. The result says that the random Lipschitz function on $\mathbb R^n$ with Lipschitz constant bounded above by $1$ is concentrated  around its mean.
\begin{result}\label{res:logsoblev}
Suppose $F$ is Lipschitz function on $\mathbb R^n$ with Lipschitz constant bounded above by $1$, then for all $t>0$
\begin{align*}
\P[F(X_1,\ldots,X_n)\ge \E[F(X_1,\ldots,X_n)]+t]\le e^{-\frac{t^2}{2}},
\end{align*}
 where $X_1,\ldots, X_n$ are i.i.d. standard normal random variables.
\end{result}

We refer the reader to see Section 2.1-2.3 in \cite{ledoux} for the proof of Result \ref{res:logsoblev} and concentration related  results.  We proceed to prove Theorem \ref{thm:normcir} assuming Lemma \ref{lem:cirnorm}.

\begin{proof}[Proof of Theorem \ref{thm:normcir}]
The spectral norm of a matrix is less than the Hilbert-Schmith norm of that matrix, therefore
\begin{align*}
\|C_n\|\le \sqrt{n\sum_{i=0}^{n-1}X_i^2}.
\end{align*}
This implies that the map $(X_0,X_1,\ldots,X_{n-1})\to \|C_n\|$ is Lipschitz with Lipschitz constant bounded by $\sqrt n$. By Result \ref{res:logsoblev}, we have 
\begin{align*}
\P[\|C_n\|\ge \E\|C_n\|+t]\le e^{-\frac{t^2}{2n}}\;\; \mbox{ for all $t>0$}.
\end{align*}
By Lemma \ref{lem:cirnorm}, we have 
\begin{align*}
\P[\|C_n\|\ge (c_1+\sqrt{2c})\sqrt{n\log n}]\le \frac{1}{n^2}.
\end{align*}
Using Borel-Cantelli lemma, we get the result for random circulant matrix. Proof for symmetric circulant matrix is similar to the proof for circulant matrix. Here we skip the details.

The result holds for random reverse circulant matrix by the fact that $\|C_n\|=\|RC_n\|$, as the absolute values of the eigenvalues of reverse circulant and circulant matrices are same (see \cite{bhs:lsd}, Section 2.2), and circulant and reverse circulant matrices are normal matrices.

Observe that $n\times n$ Hankel matrix $H_n$ can be considered as a $n\times n$ first principle block matrix of $2n\times 2n$ reverse circulant matrix $RC_{2n}$. Therefore we have 
$$
\|H_n\|\le \|RC_{2n}\|.
$$
Hence the result holds for random Hankel matrix.
\end{proof}

It remains to prove Lemma \ref{lem:cirnorm}. The key ingredient for the proof of Lemma \ref{lem:cirnorm} is Dudley's entropy bound \cite{dudley} for the supremum of a subgaussian random process.
Suppose $\{Y_x\suchthat x\in [0,1]\}$ is a random process. Let $d$  be a pseudo-metric on $[0,1]$, defined as 
$$
d(x,y)=\sqrt{\E|Y_x-Y_y|^2}.
$$
The process $\{Y_x:x\in M\}$ is called {\it subgaussian} if
\begin{align}\label{eqn:subgaussian}
\P[|Y_x-Y_y|\ge t]\le 2\exp\left[-\frac{bt^2}{d(x,y)^2}\right],\;\;\mbox{for all $x,y\in [0,1]$ and $t>0$},
\end{align}
for some constant $b>0$. For $\epsilon>0$, the $\epsilon$-covering number of $([0,1],d)$, $N(M,d,\epsilon)$, is the smallest cardinality of a subset $\mathcal N\subset [0,1]$ such that for every $x\in [0,1]$, there exists a $y\in \mathcal N$ such that $d(x,y)\le \epsilon$. The Dudley's entropy bound is following:
\begin{result}\label{res:dudley}
Let $\{Y_x: x \in M\}$ be a subgaussian random process with $\E Y_x=0 $ for every $x\in M$. Then 
$$
\E\sup_{x\in M}|Y_x|\le K\int_{0}^{\infty}\sqrt{\log N(M,d,\epsilon)}d\epsilon,
$$
where $K>0$ depends only on the constant $b$ (as in \eqref{eqn:subgaussian}).
\end{result}

We refer the reader to see Proposition 2.1 in \cite{talagrand} for the above version of the statement and proof.
\begin{proof}[Proof of Lemma \ref{lem:cirnorm}]
\textbf{Circulant matrix: }Recall, the eigenvalues  $\lambda_k$ of circulant matrix $C_n$ are given by
$$
\lambda_k=\sum_{j=0}^{n-1}X_je^{i\omega_kj}=\sum_{j=0}^{n-1}X_j\cos(\omega_k j)+i\sum_{j=0}^{n-1}X_j\sin(\omega_k j),
$$
where $\omega_k=\frac{2\pi k}{n}$ and  $k=1,2,\ldots, n$. Observe that a circulant matrix is a normal matrix, i.e., $C_nC_n^*=C_n^*C_n$. Therefore the spectral norm of $C_n$ is given by 
$$
\|C_n\|=\sup\{|\lambda_k|\suchthat k=1,\ldots,n\}.
$$
Consider a random process $\{Y_x=Y_{x}^{(1)}+iY_x^{(2)}\suchthat x\in [0,1]\}$, where  
$$
Y_x^{(1)}=\sum_{j=0}^{n-1}X_j\cos(2\pi x j)\;\;\mbox{ and }\;Y_x^{(2)}=\sum_{j=0}^{n-1}X_j\sin(2\pi x j)\;\;\mbox{for $0\le x\le  1$},
$$
and $\{X_n:n\geq 0\}$ is a sequence of standard normal random variables. Therefore using triangle inequality,   $\|C_n\|$ is bound by
$$
\|C_n\|\le \sup\{|Y_x^{(1)}|\suchthat {x\in [0,1]}\}+\sup\{|Y_x^{(2)}|\suchthat {x\in [0,1]}\}.
$$
Note that, for $0\le x,y\le 1$, we have 
\begin{align*}
\E|Y_x^{(1)}-Y_y^{(1)}|^2&=\sum_{j=0}^{n-1}(\cos(2\pi x j)-\cos(2\pi y j))^2,\\
\E|Y_x^{(2)}-Y_y^{(2)}|^2&=\sum_{j=0}^{n-1}(\sin(2\pi x j)-\sin(2\pi y j))^2.
\end{align*}
Note that \eqref{eqn:subgaussian} holds for the processes $\{Y_x^{(1)}:x\in[0,1]\}$ and $\{Y_x^{(2)}:x\in[0,1]\}$. Since $|\cos s-\cos t|\le |s-t|$ and $|\sin s-\sin t|\le |s-t|$,  we have 
\begin{align*}
d^{(k)}(x,y)=\sqrt{\E[|Y_x^{(k)}-Y_y^{(k)}|^2]}\le 2\pi |x-y| \sqrt{\sum_{j=0}^{n-1}j^2}\le 4 n^{\frac{3}{2}}|x-y|,\;\mbox{ for $k=1,2$.}
\end{align*}
 Therefore the $\epsilon$-covering number of $([0,1],d^{(k)})$ is
\begin{align*}
N([0,1],d^{(k)},\epsilon)\le N\l([0,1],|\cdot|,\frac{\epsilon}{4 n^{\frac{3}{2}}}\r)\le \frac{4 n^{\frac{3}{2}}}{\epsilon}, \;\mbox{ for $k=1,2$.}
\end{align*}
But, always $d^{(k)}(x,y)\le 2\sqrt{n}$ for $k=1,2$, as $|\cos t|\le 1$ and $|\sin t|\le 1$, and hence $N([0,1],d^{(k)}, \epsilon)=1$ if $\epsilon>2\sqrt n$ for $k=1,2$. Therefore by Result \ref{res:dudley}, we have 
\begin{align*}
\E\|C_n\|\le 2K \int_{0}^{2\sqrt{n}}\sqrt{\log\l(\frac{4 n^{\frac{3}{2}}}{\epsilon}\r)}d\epsilon.
\end{align*} 
By substituting $\epsilon=4 n^{\frac{3}{2}}e^{-t}$, and by integration by parts, we have
\begin{align*}
\E\|C_n\|\le 8K n^{\frac{3}{2}}\int_{\log n}^{\infty}\sqrt t e^{-t}dt\le 8Kn^{\frac{3}{2}}\left(\frac{\sqrt{\log n}}{n}+\frac{1}{2n\sqrt{\log n}}\r). 
\end{align*}
Hence the result holds for random circulant matrix. The result holds for reverse circulant and Hankel matrices from the fact that $\|C_n\|=\|RC_n\|$ and $\|H_n\|\le \|RC_{2n}\|$, as  explained in the proof of Theorem \ref{thm:normcir}.

\vspace{.2cm}
\noindent \textbf{Symmetric circulant matrix: } The eigenvalues  $\{\lambda_k\suchthat k=0,\ldots,n-1\}$ of an $n\times n$ symmetric circulant matrix (see \cite{bhs:lsd}, Section 2.2) are given by,
for $n$ odd,
\begin{align*}
\lambda_k=x_0+2\sum_{j=1}^{\lfloor\frac{n}{2}\rfloor}x_j\cos(\omega_k j)\;\;\mbox{for $k=0,1,\ldots,\lfloor\frac{n}{2}\rfloor$},
\end{align*} 
and $\lambda_{n-k}=\lambda_k$ for $k=1,2,\ldots,\lfloor\frac{n}{2}\rfloor$,
and for $n$ even,
\begin{align*}
\lambda_k=x_0+2\sum_{j=1}^{\frac{n}{2}-1}x_j\cos(\omega_k j)+(-1)^kx_{\frac{n}{2}}\;\;\mbox{for $k=0,1,\ldots,\frac{n}{2}$},
\end{align*} 
and $\lambda_{n-k}=\lambda_k$ for $k=1,2,\ldots,\frac{n}{2}$.

Also recall that our symmetric circulant matrix is constructed using Gaussian input sequence $\{X_n:n\geq 0\}$.
Since the symmetric circulant matrix is a symmetric matrix, we have
\begin{align*}
\E\|SC_n\|&=\E\sup\l\{|\lambda_k|\suchthat k=0,1,\ldots,n-1\r\}
\\&\le 2\E|X_0|+2\E\sup\l\{\l|\sum_{j=1}^{\lfloor\frac{n}{2}\rfloor}X_j\cos(\omega_k j)\r|\suchthat k=1,\ldots,\lfloor\frac{n}{2}\rfloor\r\},
\end{align*} 
for   both odd and even $n$. As $\E|X_0|=\sqrt\frac{2}{\pi}$, we have 
$$
\E\|SC_n\|\le 2\sqrt\frac{2}{\pi}+\E\sup\{|Y_x|\suchthat 0\le x\le 1\}, \mbox{ where $Y_x=\sum_{j=1}^{\lfloor\frac{n}{2}\rfloor}X_j\cos(2\pi x j)$}.
$$
Now following  the same argument given in the proof for circulant matrix case,
we have 
$$
\E\|SC_n\|\le C.\sqrt{n\log n},
$$
for some positive constant $C$. This completes the proof of the lemma.
\end{proof}

\begin{remark}
In Theorem \ref{thm:normcir}, we show that  $\limsup_{n \to \infty}\frac{\|A_n\|}{\sqrt{n\log n}}\le C \;\mbox{ a.s.},
$ where $A_n$ is one of the $n\times n$ matrix among random circulant, symmetric circulant, reverse circular and Hankel matrices with Gaussian input sequence. But this result holds even if the elements of the input sequence  are  independent, symmetric and uniformly subgaussian, as Result \ref{res:logsoblev}  and Lemma \ref{lem:cirnorm}   are true for  independent, symmetric and uniformly subgaussian entries  also (see  \cite{meckes}).
\end{remark}

\vspace{.2cm}
\noindent{\bf Acknowledgement:} The authors would like to thank Niranjan Balachandran and Manjunath Krishnapur for useful comments. 

\bibliography{bibtex}
\bibliographystyle{amsplain}
\end{document}